\let\c@author\relax
\theoremstyle{remark}
\newtheorem{definition}{Definition}
\newtheorem{proposition}{Proposition}
\newtheorem{corollary}{Corollary}
\newcommand{\dashToTO}[1]{%
  \IfBeginWith{#1}{TO}{%
    TO ($<1\%$)%
  }{%
    \StrSubstitute{#1}{-}{TO}[\temp]%
    \temp
  }%
}
\journal{JORS}
\begin{document}

\begin{frontmatter}
\title{Equitable Routing--Rethinking the Multiple Traveling Salesman Problem}

\author[1]{Abhay Singh Bhadoriya}
\ead{abhaysb@amazon.com}
\affiliation[1]{organization={Amazon}, 
city={Seattle}, state={Washington}, postcode={98109}, country={USA}}

\author[2]{Deepjyoti Deka}
\ead{deepj87@mit.edu}
\affiliation[2]{organization={MIT Energy Initiative}, 
city={Cambridge}, state={Massachusetts}, postcode={02142}, country={USA}}

\author[3]{Kaarthik Sundar\corref{cor}}
\ead{kaarthik@lanl.gov}
\affiliation[3]{organization={Los Alamos National Laboratory}, 
city={Los Alamos}, state={New Mexico}, postcode={87545}, country={USA}}
\cortext[cor]{Corresponding author}

\begin{abstract}
The Multiple Traveling Salesman Problem (MTSP) extends the traveling salesman problem by assigning multiple salesmen to visit a set of targets from a common depot, with each target visited exactly once while minimizing total tour length. A common variant, the min-max MTSP, focuses on workload balance by minimizing the longest tour, but it is difficult to solve optimally due to weak linear relaxation bounds. This paper introduces two new parametric fairness-driven variants of the MTSP: the $\varepsilon$-Fair-MTSP and the $\Delta$-Fair-MTSP, which promote equitable distribution of tour lengths while controlling overall cost. The $\varepsilon$-Fair-MTSP is formulated as a mixed-integer second-order cone program, while the $\Delta$-Fair-MTSP is modeled as a mixed-integer linear program. We develop algorithms that guarantee global optimality for both formulations. Computational experiments on benchmark instances and real-world applications, including electric vehicle fleet routing, demonstrate their effectiveness. Furthermore, we show that the algorithms presented for the fairness-constrained MTSP variants can be used to obtain the Pareto front of a bi-objective optimization problem in which one objective minimizes the total tour length and the other balances the lengths of the individual tours. Overall, these fairness-constrained MTSP variants provide a practical and flexible alternative to the min-max MTSP.
\end{abstract}



\begin{keyword}
Fairness \sep Load Balancing \sep Multiple Traveling Salesman Problem \sep Second-Order Cone Constraints \sep Pareto Front \sep Bi-objective Optimization
\end{keyword}

\end{frontmatter}

\section*{Practitioner summary} Workload balancing is an essential aspect of many decision-making problems in practical applications beyond MTSP, such as scheduling, facility location, supply chain management, and inventory management, and most existing approaches to enforce workload balancing involve changing the optimization objective to a min-max formulation. The fair-MTSP formulated in this paper and the algorithms to solve it can be extended to other applications in which some notion of fairness or workload balancing needs to be incorporated, with minimal changes to both the problem formulation and the algorithms to solve it to optimality. Furthermore, it will enable practitioners to quantify the inherent trade-off between maximizing efficiency and enforcing fairness. As for the MTSP itself, the results of this paper can be applied immediately to managing a fleet of electric vehicles or small drones for package delivery, surveillance, and capacitated vehicle routing problems in logistics, among other applications. 

\section{Introduction}
The single-depot Multiple Traveling Salesman Problem (MTSP) is a generalization of the well-known Traveling Salesman Problem (TSP). Given a depot, a set of targets, and multiple salesmen stationed at the depot, the objective is to find, for each salesman, a tour through a subset of targets such that each tour starts and ends at the depot and every target is visited exactly once. The goal is to minimize the total length of all tours. The MTSP has numerous applications in both civilian and military domains, including transportation and package delivery (see \cite{VRPDroneDelivery2016,sundar2022branch,murray2020multiple}), multi-robot task allocation (see \cite{bektas2006multiple}), disaster recovery, and surveillance (see \cite{DubinsTSP2008,RathinamDubinsTSP2007,sundar2015exact,venkatachalam2018two}), among others. It is also a special case of the broader class of Vehicle Routing Problems (VRP) (see \cite{otto2018optimization}), which introduces additional constraints related to vehicle capacities and target constraints. When these capacity constraints are ignored, the VRP reduces to the MTSP. 

In its standard form, the MTSP has a ``min-sum'' or ``min-cost'' objective that seeks to minimize the sum of the individual tour lengths. However, this formulation often leads to an imbalance in tour lengths among salesmen (\cite{bertsimas2011price}). For applications such as package delivery, persistent surveillance, and electric vehicle fleet management (\cite{francca1995m,scott2020market,hari2020optimal}), a more equitable distribution of tour lengths is desirable. Intuitively, enforcing fairness in tour lengths comes at the cost of efficiency, creating an inherent trade-off between minimizing the sum of tour lengths and ensuring equitable distribution. Various approaches have been proposed to achieve fairness in the MTSP by introducing equity-based objective functions. 

The most widely used fairness-driven objective function in the literature is the min-max objective, which seeks to minimize the length of the longest tour (\cite{applegate2002solution}). We refer to this variant as the ``min-max MTSP''. Solving both the min-sum and min-max versions of the MTSP optimally typically involves formulating them as Mixed Integer Linear Programs (MILPs) and applying state-of-the-art branch-and-cut techniques (\cite{bektas2006multiple,applegate2002solution,francca1995m,sundar2015exact}). However, the min-max MTSP has received less attention than its min-sum counterpart because its continuous relaxations yield weak lower bounds, which in turn lead to poor performance of branch-and-cut algorithms.

\textcolor{black}{A more recent equity-based objective function is the finite-dimensional $\ell^p$ norm of the vector of tour lengths (where $p \in \mathbb{Z}^{+}$). Notably, when $p=1$, the $\ell^p$ norm reduces to the classical min-sum MTSP formulation, while as $p \to \infty$, it recovers the min-max MTSP. Thus, the $p$-norm objective provides a continuum between efficiency (min-sum) and fairness (min-max). It has been empirically observed in \cite{bektacs2020using} that using the $\ell^p$ norm objective function with $p=2$ promotes fairness in tour length distribution. However, the appropriate choice of $p$ for optimizing fairness remains unclear. Increasing $p$ can enhance or reduce fairness in tour lengths; a trend also observed in \cite{sundar2024parametric} for power grid optimization problems. This uncertainty makes selecting the right $p$ to achieve a given efficiency-fairness trade-off value challenging. We refer to this variant as the $p$-norm MTSP.}

For both the $p$-norm and the min-max versions of the MTSP, the fairness of the distribution of
tour lengths is measured \textit{a posteriori} using fairness indices or metrics. The most commonly used metrics are the Gini coefficient (\cite{Gini1936}) and the Jain et al. index (\cite{Jain1984}), with the former originating in economics and the latter in communication networks.
If $\bm{l}$ denotes the $m$-dimensional non-negative vector of feasible tour lengths for the MTSP, then the Gini coefficient is defined as:
\begin{gather}
    \text{Gini Coefficient: } \mathrm{GC}(\bm l) \triangleq \frac{\sum_{1 \leqslant i \leqslant j \leqslant m} | l_i - l_j |}{(m-1) \cdot \sum_{i=1}^m l_i}. \label{eq:gi}
\end{gather}
The Gini coefficient $\mathrm{GC}$ takes a value of $1$ when the vector is most unfair (i.e., exactly one element is non-zero) and a value of $0$ when all elements are equal. Since $\bm{l}$ is a non-negative vector, $\mathrm{GC}$ is always bounded within $[0, 1]$. \textcolor{black}{We note that the definition of the Gini coefficient in Eq. \eqref{eq:gi} differs from some standard formulations in the literature by a scaling factor. This normalization is chosen so that $\mathrm{GC}(\bm l) \in [0,1]$ for any non-negative vector $\bm l$, which is convenient for directly parameterizing fairness levels. Since all such definitions are equivalent up to scaling, this choice does not affect any of the results presented in the article.}
Similarly, the Jain et al. index for $\bm{l}$ is given by:
\begin{gather}
    \text{Jain et al. Index: } \mathrm{JI}(\bm l) \triangleq \frac{1}{m} \cdot \frac{\left( \sum_{i=1}^{m} l_i \right)^2}{\sum_{i=1}^{m} l_i^2}. \label{eq:ji}
\end{gather}
The Jain index $\mathrm{JI}$ takes a value of $1/m$ when exactly one element is non-zero and $1$ when all elements are equal. Like the Gini coefficient, $\mathrm{JI}$ is bounded, with its range being $[1/m, 1]$. Throughout this article, we use these metrics to evaluate fairness in the tour-length distributions across different MTSP formulations.

For a broader discussion on fairness in combinatorial optimization problems, we refer readers to \cite{bektacs2020using}. Here, we review the literature on enforcing fairness or workload balancing in MTSP and VRP variants not covered in the previous section. Due to the challenges of developing exact approaches for the min-max MTSP and its VRP variants--primarily stemming from weak continuous relaxations--\cite{zukerman2008fair} introduced the concept of $(\alpha, \beta)$-fairness, which enforces fairness by imposing upper and lower bounds on tour lengths. Later, \cite{bektacs2013balancing} formulated the MTSP with tour-length constraints using flow-based MILP constraints and solved it optimally via Bender's Decomposition, reporting results for instances with up to 70 nodes. Another approach to fairness in MTSP and VRP is bi-objective optimization, which balances the min-sum and min-max objectives. For example, \cite{sarpong2013column} developed a column generation method to compute lower bounds for a bi-objective VRP, while \cite{halvorsen2016bi} explored different route balancing criteria and their impact on Pareto-optimal frontiers.
Additionally, \cite{matl2018workload} provides a comprehensive survey on workload balancing in VRP variants, though it does not focus on exact algorithmic solutions. More recently, \cite{kinable2017exact} introduced the equitable TSP, which seeks to construct two perfect matchings in a network that together form a tour while minimizing the cost difference between them. This work contributes to fairness-driven routing by explicitly accounting for equity in cost distribution. In summary, existing approaches to workload balancing in MTSP typically fall into three categories: (i) modifying the objective function to integrate fairness, as seen in the min-max MTSP and $p$-norm MTSP, (ii) introducing explicit lower and upper bounds on tour lengths to enforce balance, or (iii) resorting to bi-objective optimization to include both the efficiency and fairness objectives. This paper takes a fundamentally different approach to incorporating fairness by leveraging recent advancements in \cite{sundar2024parametric} and enforcing fairness as a parameterized constraint, where the parameter quantifies the level of fairness enforced. This approach enables the development of computationally efficient exact algorithms to solve the resulting fair variants of the MTSP, which allows a systematic exploration of the efficiency-fairness trade-off. We also show that our constraint-driven fairness approach is equivalent to scalarizing (see \cite{hwang2012multiple}) a bi-objective MTSP with the min-sum and fairness-maximization objectives. 

These approaches highlight the challenge of balancing fairness and efficiency in MTSP and VRP formulations, demonstrating the complexity of these formulations and the diversity of methods used to address them. Beyond MTSP, workload balancing is critical in scheduling, facility location, and other decision-making problems. Typically, fairness is enforced using a min-max objective, similar to the min-max MTSP. The fairness-driven MTSP formulations and solution approaches presented here can be adapted to other combinatorial optimization problems in which fairness is essential.
 
\subsection{Challenges in existing fairness-oriented MTSP} \label{subsec:challenges}
Both the min-max MTSP and the $p$-norm MTSP have two significant drawbacks. Firstly, both variants can be notoriously difficult to solve to optimality; the former because of the weakness in the bounds provided by its continuous relaxation and the latter because they are formulated as Mixed-Integer Convex Programs (MICPs) \cite{bektacs2020using}. MICPs cannot be solved optimally without algorithmic enhancements when using off-the-shelf solvers for $p > 2$. Secondly, both variants yield feasible MTSP solutions whose distribution of tour lengths is fair, but do not provide a continuum of solutions with varying fairness levels. The only way to explore different fairness-cost trade-offs in the $p$-norm MTSP is through brute-force methods: solving for multiple $p$ values, computing fairness indices, and comparing results.

\subsection{Contributions} \label{subsec:contributions}
To overcome the limitations of the min--max and $p$-norm variants of the MTSP, this article introduces two new parameterized formulations: the $\varepsilon$-Fair-MTSP ($\varepsilon$-F-MTSP) and the $\Delta$-Fair-MTSP ($\Delta$-F-MTSP). For brevity, we use the term F-MTSP to refer to either variant, with the intended meaning made clear from context.
These formulations enforce an equitable distribution of tour lengths among salesmen as a Second-Order Cone (SOC) constraint, or as a linear constraint, respectively, while still minimizing the total tour length.

$\varepsilon$-F-MTSP incorporates fairness into the MTSP formulation using a recently introduced model called \emph{$\varepsilon$-fairness} in \cite{sundar2024parametric}, which is enforced through a parametric SOC constraint. The SOC constraint includes a parameter $\varepsilon \in [0, 1]$, where $\varepsilon = 0$ imposes no fairness constraints, and $\varepsilon = 1$ enforces complete equality among tour lengths. Intermediate values of $\varepsilon \in (0, 1)$ correspond to varying levels of fairness. Since $\varepsilon$-fairness is represented as an SOC constraint, this variant, referred to as $\varepsilon$-F-MTSP, results in a Mixed-Integer Second-Order Cone Program (MISOCP). We establish key theoretical properties of this MISOCP and develop computationally efficient algorithms to solve it optimally. Furthermore, we demonstrate that these algorithmic approaches can also be leveraged to efficiently solve the $p$-norm MTSP for any $p \in \mathbb{Z}^{+}$. The $\Delta$-Fair-MTSP enforces fairness using a single linear constraint that bounds the Gini coefficient $\mathrm{GC}$ by an upper limit $\Delta \in [0, 1]$. This formulation ensures that fairness is explicitly controlled through $\Delta$, and we show that it can be solved optimally using existing state-of-the-art techniques. \textcolor{black}{We clarify that the proposed $\varepsilon$-F-MTSP and $\Delta$-F-MTSP formulations are not direct substitutes for the min-max MTSP. The min-max MTSP explicitly minimizes the maximum tour length and is therefore the appropriate model when controlling the worst-case workload is the primary objective. In contrast, our formulations minimize the total tour length subject to a parametric fairness constraint that regulates the dispersion of tour lengths. As such, they do not directly optimize the maximum tour length. Rather, they provide a flexible framework to explore the trade-off between efficiency and fairness. Notably, for sufficiently strict fairness levels (i.e., large $\varepsilon$ or small $\Delta$), the resulting solutions tend to approximate min-max behavior by tightly controlling variability in tour lengths, while also enabling intermediate solutions that may achieve comparable balance at lower total cost and often with improved computational performance. Accordingly, these formulations should be viewed as complementary tools for navigating the fairness--efficiency trade-off, rather than exact surrogates for the min-max MTSP.}

In addition, we formulate bi-objective versions of both $\varepsilon$-F-MTSP and $\Delta$-F-MTSP, considering a ``min-sum'' objective alongside a ``fairness-maximization'' objective. We establish that $\varepsilon$-F-MTSP and $\Delta$-F-MTSP can be interpreted as scalarizations of these bi-objective formulations (see \cite{hwang2012multiple}). The scalarizations enable the development of an efficient algorithm to compute the Pareto front, capturing the trade-off between fairness and efficiency. To the best of our knowledge, this is the first work to achieve this. Finally, through extensive computational experiments, we (i) validate the effectiveness of our algorithms in solving both $\varepsilon$-F-MTSP and $\Delta$-F-MTSP as well as the $p$-norm MTSP, (ii) quantify the trade-off between cost and fairness using the proposed $\varepsilon$-F-MTSP and $\Delta$-F-MTSP formulations, and (iii) demonstrate the superiority of $\varepsilon$-F-MTSP and $\Delta$-F-MTSP over the $p$-norm MTSP and min-max MTSP in terms of fairness enforcement and computational efficiency.

\subsection{Paper organization} \label{subsec:org}
The rest of the article is structured as follows: Section \ref{sec:mtsp} we present some mathematical preliminaries and formulate all the variants of the MTSP, Section \ref{sec:properties} presents some theoretical properties of the two $\varepsilon$-F-MTSP and $\Delta$-F-MTSP, Section \ref{sec:algo} develops algorithms to solve all the variants of the MTSP presented in the paper, Section \ref{sec:results} presents results of extensive computational experiments to corroborate the effectiveness of the formulations and algorithms followed by conclusion and way forward in Section \ref{sec:conclusion}. 

\section{Mathematical preliminaries and formulations} \label{sec:mtsp}
We start by introducing the notations. To that end, we let $T$ denote the set of $n$ targets (i.e., customers) and $d$ denote the depot where $m$ salesmen are stationed. The MTSP is then formulated on an undirected graph $G = (V, E)$ where $V \triangleq T \bigcup \{d\}$ is the vertex set and $E \triangleq \{(i, j): i, j \in V, i\neq j\}$ is the edge set. Since the graph is undirected, $(i, j)$ and $(j, i)$ refer to the same edge that connects vertices $i$ and $j$. Associated with each edge $(i, j) \in E$ is a non-negative value $c_{ij}$ that denotes the cost of the edge. Depending on the application, this cost can be used to model distance, time, or anything else. We also assume that the costs satisfy the triangle inequality, i.e., for every $i, j, k \in V$, and $c_{ij} \leqslant c_{ik} + c_{ki}$. Finally, given a subset of vertices $S \subset V$, we define 
\begin{gather*}
    \delta(S) \triangleq \{(i, j) \in E: i \in S, j \notin S \} \text{ and } \\ 
    \gamma(S) \triangleq \{(i, j) \in E: i, j \in S\}. 
\end{gather*}
Furthermore, if $S = \{i\}$, we use $\delta(i)$ in place of $\delta(\{i\})$. 

\subsection{Min-sum multiple traveling salesman problem} \label{subsec:mtsp-min-sum}
Using the notation introduced in the previous paragraph, we formulate the single-depot MTSP as an MILP, inspired by existing formulations for vehicle routing problems (see \cite{toth2002vehicle}). Associated with every salesman $v$ and edge $(i, j) \in E$ is an integer decision variable $x_{ij}^v$ whose value is the number of times the edge $(i, j)$ is in the salesman $v$'s tour. Note that for some edges $(i, j) \in E$, $x_{ij}^v \in \{0, 1, 2\}$, i.e., we permit the degenerate case where a tour for salesman $v$ can consist of just the depot and a target. If $e \in E$ connects two vertices $i$ and $j$, then $(i, j)$ and $e$ will be used interchangeably to denote the same edge. Also associated with each salesman $v$ and each vertex $i \in T$ is a binary decision variable $y_i^v$ that takes the value $1$ if $i$ is visited by salesman $v$ and $0$ otherwise. Finally, for every salesman $v$, we let $l_v$ denote the length of their tour. Using the above notations, the min-sum MTSP can be formulated as follows:
\begin{subequations}
\begin{flalign}
    & (\mathcal F_1): \quad \min ~~ \sum_{1 \leqslant v \leqslant m} l_v \quad \text{ subject to:} & \label{eq:mtsp-obj} \\
    & \sum_{e \in E} c_e x_e^v = l_v \quad \forall v \in \{1, \cdots, m\} \label{eq:mtsp-l} \\
    & \sum_{e \in \delta(i)} x^v_e = 2 \cdot y_i^v \quad \forall i \in T, v \in \{1, \cdots, m\}  \label{eq:mtsp-degree} \\
    & \sum_{e \in \delta(S)} x^v_e \geqslant 2 \cdot y_i^v \quad \forall i \in S, S\subseteq T, v \in \{1, \cdots, m\} \label{eq:mtsp-sec} \\
    & \sum_{1 \leqslant v \leqslant m} y_i^v = 1 \quad \forall i \in T \label{eq:mtsp-target-visit} \\ 
    & y_d^v = 1 \quad \forall v \in \{1, \cdots, m\} \label{eq:mtsp-depot-visit} \\
    & x_e^v \in \{0, 1, 2\} \quad \forall e \in \{(d, i): i \in T\}, v \in \{1, \cdots, m\} \label{eq:mtsp-edge-depot}\\ 
    & x_e^v \in \{0, 1\} \quad \forall e \in \{(i, j): i, j \in T\}, v \in \{1, \cdots, m\} \label{eq:mtsp-edge-target}\\ 
    & y_i^v \in \{0, 1\} \quad \forall i \in V, v \in \{1, \cdots, m\} \label{eq:mtsp-vertex} \\ 
    & l_v \geqslant 0 \quad \forall v \in \{1, \cdots, m\} \label{eq:mtsp-l-ge0}
\end{flalign}
\label{eq:mtsp}
\end{subequations}
The objective \eqref{eq:mtsp-obj} minimizes the sum of all the tour lengths; the tour lengths are defined by the constraint \eqref{eq:mtsp-l}. Constraints \eqref{eq:mtsp-degree} and \eqref{eq:mtsp-sec} are the degree and sub-tour elimination constraints, respectively, that together enforce the feasible solution for every salesman to be a tour starting and ending at the depot. \eqref{eq:mtsp-target-visit} and \eqref{eq:mtsp-depot-visit} ensure that all the salesmen start and end at the depot and exactly one salesman visits every target $i \in T$. Finally, \eqref{eq:mtsp-edge-depot}--\eqref{eq:mtsp-l-ge0} enforce the integrality and non-negativity restrictions on the decision variables. 

\subsection{$p$-norm multiple traveling salesman problem} \label{subsec:mtsp-min-p-norm}
We now formulate the $p$-norm MTSP that seeks to enforce equitability in the distribution of the tour lengths using the $\ell^p$ norms with $p \geqslant 1$. The formulations presented in this section are based on existing work in \cite{bektacs2020using}. We start by introducing some additional notations. Given that $\bm l \in \mathbb R_{\geqslant 0}^m$ denotes the non-negative vector of tour lengths for the $m$ salesman, we define 
\begin{gather}
    \| \bm l \|_p \triangleq \left( \sum_{1 \leqslant i \leqslant m} l_i^p \right)^{\frac 1p}. \label{eq:p-norm}
\end{gather}
When $p = \infty$, the definition of $\ell^{\infty}$ norm is 
\begin{gather}
 \|\bm l \|_{\infty} \triangleq \max_{1 \leqslant i \leqslant m} l_i \label{eq:linfnorm}.
\end{gather}
For ease of exposition, we shall restrict $p$ to be an integer, although our presentation also holds any real $p$. Also, it is easy to see that for any $p \geqslant 1$, the $\ell^p$ norm is a convex function. Given these notations, the $p$-norm MTSP for $p \in [1, \infty)$ is formulated as follows:
\begin{flalign}
    & (\mathcal F_p): \quad \min ~~ \left(\sum_{1 \leqslant v \leqslant m} l_v^p\right)^{\frac 1p} \quad \text{ subject to: \eqref{eq:mtsp-l} -- \eqref{eq:mtsp-l-ge0}.} \label{eq:p-mtsp} 
\end{flalign}
Notice that when $p = 1$, the formulation $\mathcal F_p$ is equivalent to the MTSP formulation \textcolor{black}{$\mathcal F_1$} in \eqref{eq:mtsp}. Also, for any value of $p \in (1, \infty)$, $\mathcal F_p$ is a mixed-integer convex optimization problem due to the presence of $l_v^p$ in the objective function in \eqref{eq:p-mtsp}. In the later sections, we develop specialized algorithms to solve the $p$-norm MTSP for any value of $p \in [1, \infty)$ to global optimality. The formulation for $p = \infty$ is provided in the next section. 

\subsection{Min-max multiple traveling salesman problem} \label{subsec:mtsp-min-max}
The mathematical formulation of the min-max MTSP is equivalent to the $p$-norm MTSP when $p = \infty$. This equivalence also justifies the current use of min-max objective functions to enforce fairness in the distribution of tour lengths or, in general, workload balancing for optimization problems in other applications. The formulation for the min-max MTSP is given by 
\begin{flalign}
    & (\mathcal F_{\infty}): \quad \min \left( \max_{1 \leqslant v \leqslant m}  l_v \right) \quad \text{ subject to: \eqref{eq:mtsp-l} -- \eqref{eq:mtsp-l-ge0}.} \label{eq:min-max-mtsp} 
\end{flalign}
The above formulation can be converted to a MILP by the introduction of an additional auxiliary variable $z \geqslant 0$ as follows:
\begin{subequations}
    \begin{flalign}
     & (\mathcal F_{\infty}): \quad \min z \quad \text{ subject to: \eqref{eq:mtsp-l} -- \eqref{eq:mtsp-l-ge0}} \label{eq:min-max-mtsp-obj} \\ 
     & z \geqslant l_v \quad \forall v \in \{1, \cdots, m\} \text{ and } z \geqslant 0 \label{eq:min-max-mtsp-aux}
\end{flalign}
\label{eq:min-max-mtsp-lin}
\end{subequations}
Next, we present two novel formulations for the F-MTSP to enforce different levels of fairness in the distribution of tour lengths.

\subsection{Fair multiple traveling salesman problem}  \label{subsec:f-mtsp}
We propose mathematical formulations for two variants of the F-MTSP. These variants differ in the manner they enforce the fairness in the distribution of tour lengths. The first variant, referred to as $\varepsilon$-F-MTSP, is based on a recently developed model for fairness called $\varepsilon$-fairness in \cite{sundar2024parametric}; $\varepsilon = 0$ corresponds to no fairness constraints being enforced on the F-MTSP making it equivalent to the min-sum MTSP, and $\varepsilon = 1$ corresponds to enforcing all the tour lengths to be equal. $\varepsilon$-F-MTSP will result in a MISOCP for a fixed value of $\varepsilon > 0$. The second variant, referred to as $\Delta$-F-MTSP, will enforce an upper-bound of $\Delta \in [0, 1]$ on the Gini coefficient of the tour lengths \eqref{eq:gi} using a single linear constraint. $\Delta = 1$  corresponds to a trivial upper bound on the Gini coefficient and is equivalent to the min-sum MTSP and as $\Delta$ is decreased, the level of fairness enforced increases and $\Delta = 0$ corresponds to enforcing all tour lengths to be equal. We start by presenting the formulation for $\varepsilon$-F-MTSP. 

\textcolor{black}{We note that $\varepsilon = 1$ (respectively, $\Delta = 0$) corresponds to enforcing equality of all tour lengths in principle. However, such requirements may render the problem infeasible for a given instance. In general, the feasibility of the $\varepsilon$-F-MTSP and $\Delta$-F-MTSP depends on the chosen values of $\varepsilon$ and $\Delta$, and there exist instance-dependent ranges of admissible values for which feasible solutions exist. Enforcing overly strict fairness (i.e., large $\varepsilon$ or small $\Delta$) can therefore lead to infeasibility. These feasibility domains and their properties are formally characterized in Section \ref{sec:properties}.}

\subsubsection{$\varepsilon$-fair multiple traveling salesman problem} \label{subsubsec:eps-f-mtsp}
We begin by invoking a well-known inequality that relates the $\ell_1$-norm and $\ell_2$-norm of the tour length vector $\bm{l} \in \mathbb{R}_{\geqslant 0}^m$, derived from the fundamental result on the \textit{equivalence of norms} (see \cite{horn2012matrix}):
\begin{gather}
 \|\bm{l} \|_2 \leqslant \| \bm{l} \|_1 \leqslant \sqrt{m} \cdot \|\bm{l}\|_2. \label{eq:norm-eq}
\end{gather} 
In \eqref{eq:norm-eq}, we observe that:

\begin{enumerate}[label=(\roman*)]
 \item\label{it:unfair} The first inequality holds with equality, i.e., $\|\bm{l} \|_2 = \| \bm{l} \|_1$, when all but one component of $\bm{l}$ are zero (\textit{most unfair case}).
 \item\label{it:fair} The second inequality holds with equality, i.e., $\| \bm{l}\|_1 = \sqrt{m} \cdot \|\bm{l}\|_2$, when all components of $\bm{l}$ are equal (\textit{most fair case}).
\end{enumerate}
Using the intuition provided by \ref{it:unfair} and \ref{it:fair}, we introduce the following parameterized definition of fairness:
\begin{definition} \label{def:eps-fairness} 
\textbf{$\varepsilon$-Fairness} (from \cite{sundar2024parametric}) — For any $\bm{l} \in \mathbb{R}_{\geqslant 0}^m$ and $\varepsilon \in [0, 1]$, we say $\bm{l}$ is \textit{$\varepsilon$-fair} if 
\begin{gather}
    \left(1-\varepsilon + \varepsilon\sqrt{m} \right) \cdot \|\bm{l} \|_2 = \|\bm{l}\|_1,
    \quad \text{or equivalently,} \quad
    \varepsilon = \frac{1}{\sqrt{m}-1} \left(\frac{\|\bm{l}\|_1}{\|\bm{l}\|_2} -1 \right). \label{eq:e-fair-eq}
\end{gather}
Here, we interpret $\varepsilon = 0$ as the \textit{most unfair} case and $\varepsilon = 1$ as the \textit{most fair} case. Furthermore, we say $\bm{l}$ is \textit{at least $\varepsilon$-fair} if 
\begin{gather}
 \left(1-\varepsilon + \varepsilon\sqrt{m} \right) \cdot \|\bm{l} \|_2 \leqslant \|\bm{l}\|_1,
 \quad \text{or equivalently,} \quad
 \varepsilon \leqslant \frac{1}{\sqrt{m}-1} \left(\frac{\|\bm{l}\|_1}{\|\bm{l}\|_2} -1 \right). \label{eq:soc-e-fair}
\end{gather}
\end{definition}
We note that the term $\left(1-\varepsilon + \varepsilon\sqrt{m} \right)$ in Definition \ref{def:eps-fairness} is a convex combination of $\sqrt{m}$ and $1$, where $\varepsilon$ acts as the weighting factor. Additionally, \eqref{eq:e-fair-eq} naturally reduces to \ref{it:unfair} when $\varepsilon = 0$ and to \ref{it:fair} when $\varepsilon = 1$. The inequality in \eqref{eq:soc-e-fair} also represents a second-order cone (SOC) constraint for any fixed $\varepsilon \in [0,1]$.
Definition \ref{def:eps-fairness} further allows us to define a fairness metric, which we refer to as the $\varepsilon$-fair index:
\begin{gather}
    \text{$\varepsilon$-Fair Index: } \mathrm{\varepsilon FI}(\bm{l}) \triangleq \frac{1}{\sqrt{m}-1} \left(\frac{\|\bm{l}\|_1}{\|\bm{l}\|_2} -1 \right). \label{eq:efi} 
\end{gather}
The range of $\mathrm{\varepsilon FI}(\bm{l})$ is $[0,1]$. We later use \eqref{eq:efi} in the context of an equivalent bi-objective formulation for the MTSP in Section \ref{sec:bi-obj}.

Next, we present the formulation of the $\varepsilon$-F-MTSP that incorporates fairness in the distribution of tour lengths $\bm l$. Given $\varepsilon \in [0, 1]$, The F-MTSP seeks to enforce the vector of tour lengths to be at least $\varepsilon$-fair using \eqref{eq:soc-e-fair}. This results in the following parameterized mathematical formulation for the $\varepsilon$-F-MTSP:
  \begin{flalign}
    & (\mathcal F^{\varepsilon}): \quad \min \sum_{1 \leqslant v \leqslant m} l_v \quad \text{subject to: \eqref{eq:mtsp-l} -- \eqref{eq:mtsp-l-ge0}, \eqref{eq:soc-e-fair}}. \label{eq:eps-f-mtsp} 
\end{flalign}  
The presence of \eqref{eq:soc-e-fair} in \eqref{eq:eps-f-mtsp} makes the above formulation an MISOCP. In the later sections of this article, we present some theoretical results that will provide intuition on how the $\varepsilon$-F-MTSP enforces the distribution of tour lengths to be fair and develop specialized algorithms to solve the $\varepsilon$-F-MTSP for a fixed value of $\varepsilon$ to global optimality. \\

\subsubsection{$\Delta$-fair multiple traveling salesman problem} \label{subsubsec:delta-f-mtsp}
This variant of the F-MTSP directly enforces an upper bound, denoted by $\Delta \in [0, 1]$, on the Gini coefficient of the tour lengths. We remark that the Gini coefficient in \eqref{eq:gi} has a minimum and maximum value of $0$ and $1$, respectively. Hence, setting an upper bound of $1$ on the Gini coefficient is a trivial upper bound and enforces no fairness in the optimal solution, making it equivalent to the min-sum MTSP. Mathematically, enforcing an upper bound of $\Delta$ on \eqref{eq:gi} is given by 
\begin{gather}
    \sum_{1 \leqslant i < j \leqslant m} | l_i - l_j | \leqslant \Delta \cdot (m-1) \cdot \sum_{i=1}^m l_i. \label{eq:gi-ub}
\end{gather}
Additionally, if the tour lengths of the $m$ salesmen are ordered as 
\begin{gather}
    l_1 \geqslant l_2 \geqslant \cdots \geqslant l_{m-1} \geqslant l_m \label{eq:ordering}
\end{gather}
then the absolute value function in \eqref{eq:gi-ub} can be eliminated to get the following single linear constraint 
\begin{gather}
    \sum_{i = 1}^m (m - 2i + 1) \cdot l_i \leqslant \Delta \cdot (m-1) \cdot \sum_{i=1}^m l_i. \label{eq:linear-gi}
\end{gather}
Note that ordering the tour lengths in \eqref{eq:ordering} is possible for the MTSP as it does not remove any feasible solutions. For other optimization problems, this might not be possible, and in those cases, one can resort to a linear reformulation of the absolute value in \eqref{eq:gi-ub}. Nevertheless, this can lead to two additional constraints for every $(i, j)$ pair in $1 \leqslant i < j \leqslant m$. Using the linear constraint in \eqref{eq:linear-gi}, the parameterized mathematical formulation for the $\Delta$-F-MTSP is given by: 
\begin{flalign}
    & (\mathcal F^{\Delta}): \min \sum_{1 \leqslant v \leqslant m} l_v \text{ subject to: \eqref{eq:mtsp-l} -- \eqref{eq:mtsp-l-ge0}, \eqref{eq:ordering}, \eqref{eq:linear-gi}}. \label{eq:delta-f-mtsp} 
\end{flalign}
Since \eqref{eq:ordering} and \eqref{eq:linear-gi} are linear constraints, the $\Delta$-F-MTSP for any value of $\Delta \in [0, 1]$ continues to be a MILP. Next, we introduce a metric used in the literature to study the trade-off between cost and fairness in optimization problems. 

\subsection{Cost of fairness for the MTSP} \label{subsec:pof} 
Cost of fairness (COF) is a metric used in the literature (see \cite{bertsimas2011price,bektacs2020using}) to study the trade-off between cost and fairness for general combinatorial optimization and resource allocation problems. Here, we use it to study the cost-fairness trade-off for different models of the MTSP. To define COF, let $\mathcal F$ denote any version of the MTSP that focuses on enforcing an equitable distribution of tour lengths \textcolor{black}{and let} $\bm l^*(\mathcal F)$ denote the vector of optimal tour lengths obtained by solving $\mathcal F$. On similar lines, we let $\bm l^*_{\mathrm{min-sum}}$ denote the optimal tour lengths obtained by solving $\mathcal F_1$. Then, the COF is defined as:
\begin{gather}
 \mathrm{COF}(\mathcal F) \triangleq \frac{\|\bm l^*(\mathcal F)\|_1 - \|\bm l^*_{\mathrm{min-sum}}\|_1}{\|\bm l^*_{\mathrm{min-sum}}\|_1}. \label{eq:cof}
\end{gather}
The COF for $\mathcal F$ in \eqref{eq:cof} is the relative increase in the sum of the tour lengths under the fair solution $\bm l^*(\mathcal F)$, compared to the optimal tour lengths obtained by solving \eqref{eq:mtsp}, $\bm l^*_{\mathrm{min-sum}}$. 
Note that COF is $\geqslant 0$
because the $\|\bm l^*_{\mathrm{min-sum}}\|_1$ is the minimum total tour length that any fair version of the MTSP can achieve. The values of $\mathrm{COF}(\mathcal F)$ closer to $0$ are preferable for any $\mathcal F$ because they indicate less compromise on the cost while enforcing fairness in $\bm l^*(\mathcal F)$'s distribution. The following section presents some theoretical properties of the two variants of the F-MTSP.  

\section{Theoretical properties of F-MTSP} \label{sec:properties}
We start by presenting some theoretical properties of the $\varepsilon$-F-MTSP to gain intuition on how $\varepsilon$-F-MTSP enforces fairness in the distribution of tour lengths. We remark that some properties will also extend to the $\Delta$-F-MTSP, and we refrain from proving these results again for the $\Delta$-F-MTSP. 
\subsection{Properties of $\varepsilon$-F-MTSP}
We start by introducing some notations. Let $\mu$ and $\sigma^2$ denote the sample mean and variance of all the tour lengths in $\bm l$. Then, combining the definitions of sample mean, sample variance, and the assumption that all tour lengths are non-negative, we have
\begin{gather}
 \mu = \frac{\|\bm l\|_1}m \text{ and } \sigma^2 = \frac 1{m-1}\left( \|\bm l \|_2^2 - m \mu^2 \right). \label{eq:sample-defn}
\end{gather}
Combining \eqref{eq:soc-e-fair} and \eqref{eq:sample-defn}, we get
\begin{gather}
 CV^2 \triangleq \left(\frac{\sigma}{\mu}\right)^2 \leqslant h(\varepsilon) \label{eq:cv}
\end{gather}
where,
\begin{gather}
 h(\varepsilon) \triangleq \frac{m}{m-1} \left( \frac m{\left(1-\varepsilon + \varepsilon\sqrt{m} \right)^2} - 1\right).
\end{gather}
In \eqref{eq:cv}, $CV$ is the ``coefficient of variation'' in statistics (\cite{Jain1984}), a standardized measure of frequency distribution's dispersion. It is not difficult to see that $h(\varepsilon)$ is strictly decreasing with $\varepsilon$, indicating that as $\varepsilon$ is varied from $0 \rightarrow 1$, the upper bound on $CV$ decreases from a trivial value of $\sqrt m$ to $0$ (enforces all tour lengths to be equal). In other words, $\varepsilon$-fairness enforces fairness by controlling the dispersion in $\bm l$'s distribution through $\varepsilon$.
Next, we present a crucial result from \cite{sundar2023fairly} that enables us to identify trends in the feasibility and optimality of the $\varepsilon$-F-MTSP.
\begin{proposition} \label{prop:eps-max}
Given $\bar\varepsilon \in [0, 1]$, if $\mathcal F^{\bar{\varepsilon}}$ in \eqref{eq:eps-f-mtsp} is infeasible, then it is also infeasible for any $\varepsilon \in [\bar \varepsilon, 1]$.
\end{proposition}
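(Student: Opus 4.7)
The plan is to prove the contrapositive by a straightforward monotonicity-of-feasible-set argument: I will show that the feasible region of $\mathcal F^{\varepsilon}$ shrinks (in the set-theoretic sense) as $\varepsilon$ increases, so that any feasible point for a larger $\varepsilon$ is automatically feasible for every smaller $\varepsilon \geqslant 0$. Infeasibility of $\mathcal F^{\bar\varepsilon}$ then rules out feasibility for every $\varepsilon \geqslant \bar\varepsilon$.

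First, I would isolate the only constraint in the formulation \eqref{eq:eps-f-mtsp} that depends on the parameter, namely the $\varepsilon$-fairness SOC constraint \eqref{eq:soc-e-fair}, and rewrite its left-hand side coefficient as $1 - \varepsilon + \varepsilon\sqrt{m} = 1 + \varepsilon(\sqrt m - 1)$. Since $m \geqslant 2$ (the MTSP is interesting only then, but for $m = 1$ the fairness constraint is trivially $\|\bm l\|_2 \leqslant \|\bm l\|_1$ and the claim is vacuous), this coefficient is a non-decreasing affine function of $\varepsilon$ on $[0,1]$. In particular, for any $\varepsilon_1, \varepsilon_2 \in [0, 1]$ with $\varepsilon_1 \leqslant \varepsilon_2$,
\begin{gather*}
1 + \varepsilon_1(\sqrt m - 1) \;\leqslant\; 1 + \varepsilon_2(\sqrt m - 1).
\end{gather*}

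Second, I would use this to show nesting of feasible sets. Let $(\bm x, \bm y, \bm l)$ be feasible for $\mathcal F^{\varepsilon_2}$ with $\varepsilon_2 \geqslant \varepsilon_1$. Constraints \eqref{eq:mtsp-l}--\eqref{eq:mtsp-l-ge0} are $\varepsilon$-free, so they are satisfied for any parameter. For \eqref{eq:soc-e-fair}, since $\|\bm l\|_2 \geqslant 0$ by \eqref{eq:mtsp-l-ge0} (and the triangle-inequality-based cost data ensures $c_e \geqslant 0$, so $\bm l \geqslant 0$), combining the monotonicity of the coefficient with feasibility at $\varepsilon_2$ gives
\begin{gather*}
\left(1 + \varepsilon_1(\sqrt m - 1)\right)\|\bm l\|_2 \;\leqslant\; \left(1 + \varepsilon_2(\sqrt m - 1)\right)\|\bm l\|_2 \;\leqslant\; \|\bm l\|_1,
\end{gather*}
which is exactly \eqref{eq:soc-e-fair} at $\varepsilon_1$. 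Hence $(\bm x, \bm y, \bm l)$ is feasible for $\mathcal F^{\varepsilon_1}$ as well, proving that the feasible set of $\mathcal F^{\varepsilon_2}$ is contained in the feasible set of $\mathcal F^{\varepsilon_1}$.

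Taking the contrapositive, if $\mathcal F^{\bar\varepsilon}$ has an empty feasible set, then no $\mathcal F^{\varepsilon}$ with $\varepsilon \in [\bar\varepsilon, 1]$ can have a nonempty feasible set, which is the claim. The only step that required any care was handling the degenerate case $\bm l = \bm 0$ (both sides of the SOC constraint vanish, so it is feasible for every $\varepsilon$, which is consistent with nesting); otherwise the argument is a one-line monotonicity observation, so I do not anticipate a genuine obstacle here beyond making the sign convention on $\bm l$ explicit.
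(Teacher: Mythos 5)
Your proof is correct and matches the argument the paper relies on: the paper's own proof is only a citation to an external reference, but the nesting of feasible sets as $\varepsilon$ increases (driven by the monotonicity of the coefficient $1+\varepsilon(\sqrt{m}-1)$ multiplying $\|\bm l\|_2$) is exactly the observation the authors invoke again in the proof of Proposition~\ref{prop:monotonicity}. Your handling of the degenerate case $\bm l = \bm 0$ and the contrapositive framing are both fine; nothing further is needed.
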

\begin{proof}
 See \cite{sundar2024parametric}.
\end{proof} 
The above states that if the distribution of tour lengths in $\bm l$ for the $\varepsilon$-F-MTSP cannot be made $\bar\varepsilon$-fair, it cannot be made any ``fair-er'', aligning with our intuitive understanding of fairness. A practical consequence of the proposition is that there exists a $0\leqslant \varepsilon^{\max}\leqslant 1$ such that for any $\varepsilon \in (\varepsilon^{\max}, 1]$, $\mathcal F^{{\varepsilon}}$ is infeasible, and for any $\varepsilon \in [0,\varepsilon^{\max}]$, $\mathcal F^{{\varepsilon}}$ is feasible.
Throughout the rest of the article, we shall refer to $[0, \varepsilon^{\max}]$ as the ``feasibility domain'' for $\varepsilon$-F-MTSP. 

The next result is aimed at the qualitative behavior of the COF in \eqref{eq:cof} for the $\varepsilon$-F-MTSP. We first recall that $\bm l^*(\mathcal F)$ denotes the vector of optimal tour lengths obtained by solving some version of the MTSP that focuses on enforcing fairness in the distribution of tour lengths. Then, we have the following result:
\begin{proposition} \label{prop:monotonicity}
Given an instance of the $\varepsilon$-F-MTSP, the univariate function $\|\bm l^*(\mathcal F^\varepsilon)\|_1$ monotonically increases with $\varepsilon$ for all values of $\varepsilon$ in the feasibility domain of the instance. 
\end{proposition}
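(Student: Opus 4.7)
The plan is to prove the monotonicity by a standard nested-feasibility argument: as $\varepsilon$ grows the cone constraint \eqref{eq:soc-e-fair} only gets harder to satisfy, so the feasible region of $\mathcal F^\varepsilon$ shrinks monotonically while the objective $\sum_v l_v$ stays the same. Minimizing a fixed objective over a smaller set can only increase the optimum.

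Concretely, I would fix two values $\varepsilon_1, \varepsilon_2$ in the feasibility domain with $\varepsilon_1 < \varepsilon_2$. The key observation is that $\varepsilon$ enters the formulation \emph{only} through the single SOC constraint \eqref{eq:soc-e-fair}, via the scalar coefficient $\alpha(\varepsilon) \triangleq 1 - \varepsilon + \varepsilon\sqrt{m}$. Since $\sqrt{m} \geqslant 1$, the map $\varepsilon \mapsto \alpha(\varepsilon)$ is (weakly) increasing on $[0,1]$. Hence for any non-negative tour-length vector $\bm l$,
\begin{gather*}
\alpha(\varepsilon_2) \cdot \|\bm l\|_2 \leqslant \|\bm l\|_1
\quad \Longrightarrow \quad
\alpha(\varepsilon_1) \cdot \|\bm l\|_2 \leqslant \|\bm l\|_1,
\end{gather*}
using $\|\bm l\|_2 \geqslant 0$. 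Every feasible solution $(\bm x, \bm y, \bm l)$ of $\mathcal F^{\varepsilon_2}$ is therefore feasible for $\mathcal F^{\varepsilon_1}$, so the feasible set of $\mathcal F^{\varepsilon_1}$ contains that of $\mathcal F^{\varepsilon_2}$.

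Because both problems share the identical objective $\sum_{v=1}^{m} l_v = \|\bm l\|_1$, minimizing over the larger set gives an optimum no greater than minimizing over the smaller set:
\begin{gather*}
\|\bm l^*(\mathcal F^{\varepsilon_1})\|_1 \;\leqslant\; \|\bm l^*(\mathcal F^{\varepsilon_2})\|_1.
\end{gather*}
Feasibility of $\mathcal F^{\varepsilon_2}$ is guaranteed by the assumption that $\varepsilon_2 \in [0, \varepsilon^{\max}]$ (invoking Proposition \ref{prop:eps-max} to ensure the feasibility domain is an interval), so the comparison is well-defined throughout.

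There is essentially no hard step; the only subtlety is making sure that the SOC constraint is the \emph{only} place where $\varepsilon$ appears and that the right-hand side $\|\bm l\|_1$ and left-hand factor $\|\bm l\|_2$ remain non-negative so that monotonicity of $\alpha(\cdot)$ transfers to monotonicity of the constraint. Both facts are immediate from the formulation in \eqref{eq:eps-f-mtsp} and from \eqref{eq:mtsp-l-ge0}. I would interpret ``monotonically increases'' here as non-decreasing, which is what the nested-feasibility argument yields; a strict increase need not hold in general, since the same min-sum-optimal tour vector may remain feasible across an interval of $\varepsilon$ values when it already satisfies the fairness cone.
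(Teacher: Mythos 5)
Your proof is correct and follows essentially the same route as the paper: the feasible region of $\mathcal F^{\varepsilon}$ is nested and shrinks as $\varepsilon$ grows, the objective $\|\bm l\|_1$ is fixed, and minimizing over a smaller set cannot decrease the optimum. The only difference is that you explicitly verify the nesting via the monotonicity of the coefficient $1-\varepsilon+\varepsilon\sqrt{m}$, a step the paper delegates to a citation, and your reading of ``monotonically increases'' as non-decreasing matches the intended meaning.
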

\begin{proof}
 The proof follows from the following three observations:
 \begin{enumerate}
  \item the feasible set of solutions of $\mathcal F^{\varepsilon}$ becomes smaller as $\varepsilon$ increases within its feasibility domain (see \cite{sundar2023fairly}), 
  \item $\mathcal F^\varepsilon$ is a minimization problem and 
  \item the objective function of the $\mathcal F^{\varepsilon}$ can be rewritten in terms of the 1-norm as $\|\bm l\|_1$, where $\bm l$ is the vector of decision variables $l_v$, $1 \leqslant v \leqslant m$.
 \end{enumerate}
 Together, the above observations lead to the conclusion that if $\varepsilon_1 \geqslant \varepsilon_2$, then $\|\bm l^*(\mathcal F^{\varepsilon_1})\|_1 \geqslant \|\bm l^*(\mathcal F^{\varepsilon_2})\|_1$. 
\end{proof}
Proposition \ref{prop:monotonicity} tells us that when $\varepsilon$ increases, the distribution of $\bm l$ becomes fairer, but the cost (sum of the tour lengths), given by $\|\bm l\|_1$, increases. This conforms with our intuitive understanding of the trade-off between fairness and cost and enables quantifying this trade-off for different values of $\varepsilon$. A useful corollary of Proposition \ref{prop:monotonicity} is 
\begin{corollary} \label{cor:cof}
    $\mathrm{COF}(\mathcal F^{\varepsilon})$ in \eqref{eq:cof} monotonically increases with $\varepsilon$ for all values $\varepsilon$ in the feasibility domain of the $\varepsilon$-F-MTSP instance.
\end{corollary}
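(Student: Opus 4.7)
The plan is to prove the corollary as a direct consequence of Proposition \ref{prop:monotonicity}, since the Cost of Fairness is merely an affine, strictly increasing transformation of the quantity $\|\bm l^*(\mathcal F^{\varepsilon})\|_1$ already shown to be monotone.

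First, I would rewrite the definition of $\mathrm{COF}(\mathcal F^{\varepsilon})$ from \eqref{eq:cof} by isolating the $\varepsilon$-dependence. Observing that $\|\bm l^*_{\mathrm{min-sum}}\|_1$ is a strictly positive constant that depends only on the input instance of the MTSP (and not on $\varepsilon$), I can express
\begin{gather*}
    \mathrm{COF}(\mathcal F^{\varepsilon}) \;=\; \frac{1}{\|\bm l^*_{\mathrm{min-sum}}\|_1} \cdot \|\bm l^*(\mathcal F^{\varepsilon})\|_1 \;-\; 1.
\end{gather*}
This makes explicit that $\mathrm{COF}(\mathcal F^{\varepsilon})$ is an affine function of $\|\bm l^*(\mathcal F^{\varepsilon})\|_1$ with a strictly positive slope $1/\|\bm l^*_{\mathrm{min-sum}}\|_1 > 0$. (Note: the degenerate case $\|\bm l^*_{\mathrm{min-sum}}\|_1 = 0$ would require every target to coincide with the depot, which is excluded by the problem setup; this edge case can be dismissed in one sentence.)

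Next, I would directly invoke Proposition \ref{prop:monotonicity}: for any $\varepsilon_1, \varepsilon_2$ in the feasibility domain $[0,\varepsilon^{\max}]$ with $\varepsilon_1 \geqslant \varepsilon_2$, we have $\|\bm l^*(\mathcal F^{\varepsilon_1})\|_1 \geqslant \|\bm l^*(\mathcal F^{\varepsilon_2})\|_1$. Combining this with the affine-increasing representation above yields $\mathrm{COF}(\mathcal F^{\varepsilon_1}) \geqslant \mathrm{COF}(\mathcal F^{\varepsilon_2})$, which is exactly the monotonicity claim of the corollary.

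There is no real obstacle here, as the result is a one-step corollary of the preceding proposition; the only minor subtlety is flagging positivity of the denominator $\|\bm l^*_{\mathrm{min-sum}}\|_1$ to ensure that the affine transformation preserves (rather than reverses) the inequality, and noting that the feasibility domain of $\mathcal F^{\varepsilon}$ contains $\varepsilon = 0$ so the reference point $\bm l^*_{\mathrm{min-sum}} = \bm l^*(\mathcal F^0)$ is well-defined.
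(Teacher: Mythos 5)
Your proposal is correct and matches the argument the paper intends: the corollary is stated without an explicit proof precisely because $\mathrm{COF}(\mathcal F^{\varepsilon})$ is a positively-sloped affine transformation of $\|\bm l^*(\mathcal F^{\varepsilon})\|_1$ (a fact the paper itself notes later when comparing Figures \ref{fig:pareto} and \ref{fig:cof}), so monotonicity is inherited directly from Proposition \ref{prop:monotonicity}. Your additional care about the positivity of $\|\bm l^*_{\mathrm{min-sum}}\|_1$ is a harmless and reasonable refinement.
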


The final theoretical result establishes a closed-form relationship between the Jain et al. index in \cite{Jain1984} and the value of $\varepsilon$ by showing that enforcing $\varepsilon$-fairness for the tour lengths is equivalent to setting the Jain et al. index of the tour lengths to a bijective function of $\varepsilon$. This will translate any value of the Jain et al. index sought to an equivalent $\varepsilon$ value to enforce $\varepsilon$-fairness. To the best of our knowledge, this cannot be achieved using any existing fairness model.
\begin{proposition} \label{prop:ji}
Enforcing the vector of tour lengths $\bm l  \in \mathbb R_{\geqslant 0}^m$ to be $\varepsilon$-fair is equivalent to setting $$\mathrm{JI}(\bm l) = \frac{(1-\varepsilon+\varepsilon\sqrt m)^2}m$$
\end{proposition}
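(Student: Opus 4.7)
The plan is to observe that both $\varepsilon$-fairness and the Jain et al. index are constructed from exactly the same two quantities, namely $\|\bm{l}\|_1$ and $\|\bm{l}\|_2$, so the equivalence should reduce to a one-line algebraic manipulation once both are written in terms of the ratio $\|\bm{l}\|_1/\|\bm{l}\|_2$.

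First I would rewrite $\mathrm{JI}(\bm{l})$ from its definition \eqref{eq:ji} purely in terms of the $\ell^1$ and $\ell^2$ norms:
\begin{gather*}
    \mathrm{JI}(\bm{l}) = \frac{1}{m} \cdot \frac{\left(\sum_{i=1}^m l_i\right)^2}{\sum_{i=1}^m l_i^2} = \frac{\|\bm{l}\|_1^2}{m \cdot \|\bm{l}\|_2^2}.
\end{gather*}
Next, I would start from the equality characterization of $\varepsilon$-fairness in \eqref{eq:e-fair-eq}, squaring both sides (which is reversible since both sides are nonnegative for $\varepsilon \in [0,1]$ and $\bm{l} \in \mathbb{R}_{\geqslant 0}^m$) to obtain
\begin{gather*}
    \left(1-\varepsilon+\varepsilon\sqrt{m}\right)^2 \cdot \|\bm{l}\|_2^2 = \|\bm{l}\|_1^2.
\end{gather*}
Dividing both sides by $m \cdot \|\bm{l}\|_2^2$ (assuming $\bm{l} \neq \bm{0}$, which is the non-degenerate case of interest; the zero case can be handled separately as a trivial boundary) and substituting the expression for $\mathrm{JI}(\bm{l})$ immediately yields the claimed identity.

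For the converse direction, I would simply reverse these steps: assuming $\mathrm{JI}(\bm{l}) = (1-\varepsilon+\varepsilon\sqrt{m})^2/m$, multiply through by $m\|\bm{l}\|_2^2$ and take (positive) square roots to recover the defining equality \eqref{eq:e-fair-eq} of $\varepsilon$-fairness. Since every step is a reversible algebraic identity, the two conditions are equivalent. There is no real obstacle here: the only mild subtlety is the degenerate $\bm{l}=\bm{0}$ case, for which both sides become ill-defined ratios of zeros and the statement must be interpreted as an equality of extended values or excluded as a trivial edge case; this should be acknowledged in a brief remark but does not affect the substance of the proof.
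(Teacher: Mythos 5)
Your proposal is correct and follows essentially the same route as the paper's proof: rewrite $\mathrm{JI}(\bm l)$ as $\|\bm l\|_1^2/(m\cdot\|\bm l\|_2^2)$ and substitute the equality characterization of $\varepsilon$-fairness from \eqref{eq:e-fair-eq}. Your additional remarks on reversibility of the squaring step and the degenerate case $\bm l = \bm 0$ are minor refinements of the same argument.
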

\begin{proof}
 Utilizing the non-negativity of the tour lengths, Jain et al. index in \eqref{eq:ji} can be equivalently rewritten as $$\mathrm{JI}(\bm l) = \frac{1}{m} \cdot \frac{\left( \sum_{i=1}^{m} l_i \right)^2}{\sum_{i=1}^{m} l_i^2} = \frac{\|\bm l\|_1^2}{m\cdot \|\bm l\|_2^2}.$$ Combining the above equation with the definition of enforcing $\varepsilon$-fairness in $\bm l$ leads to
 \begin{gather}
  \mathrm{JI}(\bm l) =\frac{\left(1-\varepsilon + \varepsilon\sqrt{m} \right)^2}{m} \triangleq w(\varepsilon). \label{eq:w-defn}
 \end{gather}
It is also easy to observe that enforcing $\bm l$ to be at least $\varepsilon$-fair is equivalent to $\mathrm{JI}(\bm l) \geqslant w(\varepsilon)$. 
\end{proof}
Proposition \ref{prop:ji} establishes a bijective function, $w(\varepsilon)$, that maps any value of $\varepsilon$ to an equivalent value of the Jain et al. index. Notably, the similarity between the terms used in the definitions of the Jain et al. index in \eqref{eq:ji} and the $\varepsilon$-Fair index in \eqref{eq:efi} is not coincidental. The Jain et al. index in \eqref{eq:ji} was originally hypothesized to take this form and was shown to possess desirable properties such as independence from population size, scale, and metric invariance, boundedness, and continuity with respect to small changes in tour lengths (\cite{Jain1984}). However, this development in \cite{Jain1984} did not provide a clear mathematical derivation or justification for why it effectively measures fairness. In contrast, the development of $\varepsilon$-fairness in Section \ref{sec:mtsp}, rooted in a fundamental result from linear algebra, offers a rigorous foundation for understanding the Jain et al. index.

Next, we present similar theoretical properties for the $\Delta$-F-MTSP. We remark that the proofs for the $\Delta$-F-MTSP are omitted since the arguments are very similar to the respective proofs for the $\varepsilon$-F-MTSP. 

\subsection{Properties of $\Delta$-F-MTSP}
Similar to the $\varepsilon$-F-MTSP that enforces fairness by controlling the upper bound on the coefficient of variation $CV$, the $\Delta$-F-MTSP enforces fairness on the distribution of tour lengths by controlling the sum of pairwise absolute differences. Since the Gini coefficient is based on absolute differences rather than squared values like the coefficient of variation, one can expect the fairness enforced by $\varepsilon$-F-MTSP to be more influenced by outliers (excessively low or high values)--\cite{de2007income}. We now present results analogous to Propositions \ref{prop:eps-max} and \ref{prop:monotonicity} for the $\Delta$-F-MTSP. 
\begin{proposition} \label{prop:delta-min}
    Given $\bar \Delta \in [0, 1]$, if $\mathcal F^{\bar \Delta}$ in \eqref{eq:delta-f-mtsp} is infeasible, then it is also infeasible for any $\Delta \in [0, \bar\Delta]$.
\end{proposition}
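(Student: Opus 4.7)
The plan is to mirror the monotonicity-of-feasibility argument that was invoked in Proposition \ref{prop:eps-max}. Specifically, I would show that the feasible set of $\mathcal F^{\Delta}$ is non-decreasing in $\Delta$ set-wise: for $\Delta_1 \leqslant \Delta_2$ in $[0,1]$, every solution feasible for $\mathcal F^{\Delta_1}$ is also feasible for $\mathcal F^{\Delta_2}$. Contraposing, if $\mathcal F^{\bar\Delta}$ is infeasible then the feasible set of $\mathcal F^{\Delta}$ is contained in the empty set for every $\Delta \in [0,\bar\Delta]$, yielding the claim.

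In carrying this out, I would first isolate the single $\Delta$-dependent constraint. The tour-length definition \eqref{eq:mtsp-l}, the degree and subtour-elimination constraints \eqref{eq:mtsp-degree}--\eqref{eq:mtsp-sec}, the coverage and integrality restrictions \eqref{eq:mtsp-target-visit}--\eqref{eq:mtsp-l-ge0}, and the sorting assumption \eqref{eq:ordering} are all independent of $\Delta$, so the only constraint that parametrically depends on $\Delta$ is the linearized Gini bound \eqref{eq:linear-gi}. Next, I would take an arbitrary $\bm l \geqslant 0$ satisfying \eqref{eq:linear-gi} with $\Delta = \Delta_1$ and use the non-negativity $\sum_{i} l_i \geqslant 0$ to conclude that $\Delta_1 (m-1) \sum_i l_i \leqslant \Delta_2 (m-1) \sum_i l_i$ for any $\Delta_2 \geqslant \Delta_1$. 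Chaining this with \eqref{eq:linear-gi} at level $\Delta_1$ gives \eqref{eq:linear-gi} at level $\Delta_2$, establishing the set inclusion. Taking the contrapositive then delivers the proposition exactly.

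The only edge case worth noting is the degenerate one where $\sum_i l_i = 0$: both sides of \eqref{eq:linear-gi} then collapse to zero and the inequality holds trivially for every $\Delta \in [0,1]$, so monotonicity is preserved. Because this is a direct transcription of the argument sketched for Proposition \ref{prop:eps-max}, swapping the second-order-cone constraint \eqref{eq:soc-e-fair} for the linear constraint \eqref{eq:linear-gi}, there is no substantive obstacle; the authors' comment that they omit the proof since the arguments are very similar to those for the $\varepsilon$-F-MTSP is well justified.
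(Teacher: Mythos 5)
Your proof is correct and is precisely the argument the paper alludes to when it omits the proof as ``very similar'' to the $\varepsilon$-case: the only $\Delta$-dependent constraint is \eqref{eq:linear-gi}, whose right-hand side is non-decreasing in $\Delta$ because $(m-1)\sum_i l_i \geqslant 0$, so the feasible set is nested monotonically in $\Delta$ and the contrapositive gives the claim. Nothing further is needed.
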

The above proposition states that if the $\Delta$-F-MTSP is infeasible when the upper bound on the Gini coefficient is set to $\bar\Delta$, then this bound cannot be reduced further. Similar to $\varepsilon$-F-MTSP, this proposition can be used to define a $\Delta^{\min}$ such that for any $\Delta \in [0, \Delta^{\min})$, $\Delta$-F-MTSP is infeasible and feasible for any $\Delta \in [\Delta^{\min}, 1]$ resulting in $[\Delta^{\min}, 1]$ being referred to as the ``feasibility domain'' for $\Delta$-F-MTSP.
\begin{proposition} \label{prop:delta-monotonicity}
    Given an instance of the $\Delta$-F-MTSP, the univariate function $\|\bm l^*(\mathcal F^{\Delta})\|_1$ and $\mathrm{COF}(\mathcal F^{\Delta})$ in \eqref{eq:cof} monotonically decreases with $\Delta$ for all values of $\Delta$ in the feasibility domain of the instance. 
\end{proposition}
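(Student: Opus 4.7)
The plan is to follow the same three-step structure used in the proof of Proposition \ref{prop:monotonicity}, but with the direction of monotonicity reversed because $\Delta$ appears as an \emph{upper} bound on the Gini coefficient rather than a \emph{lower} bound on a fairness quantity. Concretely, I will show that (i) the feasible set of $\mathcal F^{\Delta}$ grows weakly with $\Delta$ on its feasibility domain $[\Delta^{\min}, 1]$, (ii) since $\mathcal F^{\Delta}$ is a minimization problem whose objective $\sum_v l_v = \|\bm l\|_1$ is independent of $\Delta$, the optimal value is monotonically non-increasing in $\Delta$, and (iii) $\mathrm{COF}(\mathcal F^{\Delta})$ is an affine transformation of $\|\bm l^*(\mathcal F^{\Delta})\|_1$ with strictly positive slope, so its monotonicity follows directly from (ii).

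For step (i), all of \eqref{eq:mtsp-l}--\eqref{eq:mtsp-l-ge0} together with the ordering \eqref{eq:ordering} are independent of $\Delta$. The only $\Delta$-dependent constraint is the linear Gini bound \eqref{eq:linear-gi}, whose right-hand side $\Delta \cdot (m-1) \cdot \sum_{i} l_i$ is non-decreasing in $\Delta$ because $\sum_i l_i \geqslant 0$ by \eqref{eq:mtsp-l-ge0}. Hence, if $\Delta_1 \geqslant \Delta_2$ and $(\bm x, \bm y, \bm l)$ is feasible for $\mathcal F^{\Delta_2}$, then $(\bm x, \bm y, \bm l)$ remains feasible for $\mathcal F^{\Delta_1}$. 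Step (ii) then follows immediately: since the two problems share the objective $\|\bm l\|_1$ and $\mathcal F^{\Delta_1}$ optimizes over a superset of the feasible region of $\mathcal F^{\Delta_2}$, we conclude $\|\bm l^*(\mathcal F^{\Delta_1})\|_1 \leqslant \|\bm l^*(\mathcal F^{\Delta_2})\|_1$.

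For step (iii), for a fixed instance, $\|\bm l^*_{\mathrm{min-sum}}\|_1$ is a positive constant independent of $\Delta$, so \eqref{eq:cof} reads $\mathrm{COF}(\mathcal F^{\Delta}) = \|\bm l^*(\mathcal F^{\Delta})\|_1 / \|\bm l^*_{\mathrm{min-sum}}\|_1 - 1$, an affine function of $\|\bm l^*(\mathcal F^{\Delta})\|_1$ with positive slope. Monotone non-increase of the latter in $\Delta$ therefore transfers to $\mathrm{COF}(\mathcal F^{\Delta})$.

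There is no substantive obstacle here; the only point that merits a line of comment is the interaction of the ordering constraint \eqref{eq:ordering} with the feasible-set nesting argument. Because \eqref{eq:ordering} is itself $\Delta$-independent and is imposed identically in $\mathcal F^{\Delta_1}$ and $\mathcal F^{\Delta_2}$, any solution satisfying it for one value of $\Delta$ automatically satisfies it for the other, so no relabeling of salesmen is needed to transfer feasibility across $\Delta$. This contrasts with more general symmetry-breaking settings where one would first need to permute indices before inheriting feasibility, and is the reason the proof is essentially a one-line mirror of the $\varepsilon$-F-MTSP argument.
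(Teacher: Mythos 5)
Your proof is correct and follows exactly the route the paper intends: the paper's own proof is simply ``Similar to Proposition~\ref{prop:monotonicity},'' and your three-step argument (nested feasible sets via the relaxed Gini bound \eqref{eq:linear-gi}, monotonicity of the minimum over growing feasible sets, and the positive-slope affine relation between $\mathrm{COF}$ and $\|\bm l^*(\mathcal F^{\Delta})\|_1$) is precisely the mirrored version of that argument with the direction reversed. Your added remark that the ordering constraint \eqref{eq:ordering} is $\Delta$-independent, so feasibility transfers without relabeling, is a correct and useful detail the paper leaves implicit.
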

\begin{proof}
    Similar to Proposition \ref{prop:monotonicity}.
\end{proof}
In the next section, we present algorithmic approaches to solve all the variants of the MTSP that seek to enforce fairness to global optimality. 

\section{Algorithms} \label{sec:algo}
The state-of-the-art algorithm to solve both the min-sum MTSP and min-max MTSP to optimality is the branch-and-cut algorithm (see \cite{sundar2017algorithms}). Though the branch-and-cut algorithm is well-studied in the literature for a more general class of MVRPs (see \cite{toth2002vehicle}), we present the main ingredients of the algorithm for keeping the presentation self-contained; for a complete pseudo-code of the approach, interested readers are referred to \cite{sundar2015exact,sundar2017algorithms}. The same algorithm, without modification, is used to solve the $\Delta$-F-MTSP to optimality, since its formulation is the same as that of the min-sum MTSP, except for \eqref{eq:ordering} and \eqref{eq:linear-gi}. Furthermore, all the basic ingredients of the branch-and-cut algorithm will also be the building blocks for a custom and enhanced branch-and-cut algorithm, shown in the subsequent paragraphs, to solve the $p$-norm MTSP and the $\varepsilon$-F-MTSP. 

\subsection{Separation of sub-tour elimination constraints in \eqref{eq:mtsp-sec}} \label{subsec:sec}
We first remark that if not for the sub-tour elimination constraints in \eqref{eq:mtsp-sec} that are exponential in the number of targets, the min-sum or min-max MTSP can directly be provided to an off-the-shelf commercial or open-source MILP solver that uses a standard implementation of the branch-and-cut algorithm to solve the problems to optimality. \textcolor{black}{Since the number of sub-tour elimination constraints scales exponentially with the number of targets, explicitly encoding them for the solver becomes intractable}. The standard solution to this difficulty is first to solve the relaxed problem, i.e., the problem without \eqref{eq:mtsp-sec}, and add sub-tour elimination constraints only when the solution to the relaxed problem violates them. The algorithms that identify the subset of targets for which the sub-tour elimination constraints are violated, given any solution to the relaxed problem, are called separation algorithms (see \cite{toth2002vehicle}). We now present the separation algorithms that can identify a subset of targets ($S$ in \eqref{eq:mtsp-sec}) for which \eqref{eq:mtsp-sec} is violated, given either an integer or a fractional feasible solution to the relaxed problem.  

Before presenting the separation algorithms, we introduce notation. For every salesman $v$, we let $G^*_v = (V^*_v, E^*_v)$ denote the support graph associated with a given fractional or integer solution to the relaxed problem $(\bm x^*, \bm y^*)$, i.e., $V^*_v \triangleq \{ i\in V: y_i^v > 0 \}$ and $E^*_v \triangleq \{e \in E: x_e^v > 0\}$. Here, $\bm x^*$ and $\bm y^*$ are the vectors of the decision variable values in \eqref{eq:mtsp-edge-depot}--\eqref{eq:mtsp-vertex}. For every salesman $v$, the violation of the inequality \eqref{eq:mtsp-sec} can be verified by examining the connected components in $G^*_v$. Each connected component $C$ that does not contain the depot $d$ generates a
violated sub-tour elimination constraint for $S = C$ and each $i \in S$. If a connected component $C$ contains the depot $d$, the following procedure is used to find the one or more violated sub-tour elimination constraints \eqref{eq:mtsp-sec} for salesman $v$. 
Given a connected component $C$ for salesman $v$ that contains a depot $d$ and a fractional solution $(\bm x^*, \bm y^*)$, the most violated constraint of the form \eqref{eq:mtsp-sec} can be obtained by computing a global min-cut (\cite{gomory1961multi}) on a capacitated, undirected graph $\bar G_v = (C, E^*_v)$, with each edge in $e \in E^*_v$ assigned the capacity of $\bm x_e^{v*}$. The global min-cut yields a disjoint partition of the set $C = C_1 \bigcup C_2$ with $C_1$ containing the depot. If the value of this min-cut is strictly less than $2 \cdot y_i^{v*}$ for $i \in C_2$, then the sub-tour elimination constraint corresponding to the salesman $v$, the set $S = C_2$, and $i \in C_2$ is violated. This constraint or set of constraints is then added to the relaxed problem, and the problem is re-solved. 

The branch-and-cut approach equipped with the above separation algorithm for dynamically generating sub-tour elimination constraints can be directly used to solve the min-sum MTSP, min-max MTSP, and $\Delta$-F-MTSP for any $\Delta \in [0, 1]$, as all of these variants are formulated as MILPs. Application of this approach to the $p$-norm MTSP (with $1 < p < \infty$) and the $\varepsilon$-F-MTSP (with $\varepsilon > 0$) requires enhancements to handle the convex $\ell^p$ norm of the tour lengths in the objective function of \eqref{eq:p-norm} and the SOC in \eqref{eq:soc-e-fair}, respectively. When $p \in \{1, \infty\}$, the $p$-norm MTSP reduces to the min-sum and min-max MTSP, respectively, and when $\varepsilon = 0$, the $\varepsilon$-F-MTSP is equivalent to the min-sum MTSP. In the subsequent sections, we present an outer-approximation technique that dynamically adds linear outer approximations of the $\ell^p$ norm and the SOC constraint when a solution to the relaxed problem, which ignores these constraints, violates them. We start with the $p$-norm MTSP in the next section. 

\subsection{$p$-norm MTSP} \label{subsec:p-norm-algo}
To outer approximate the $\ell^p$ norm of the tour lengths in the objective function, we first rewrite \eqref{eq:p-norm} equivalently as follows: 
\begin{subequations}
\begin{gather} 
    (\mathcal F_p): ~ \min z \text{ subject to: \eqref{eq:mtsp-l} -- \eqref{eq:mtsp-l-ge0}, and }  \\ 
    z^p \geqslant \sum_{1 \leqslant v \leqslant m} l_v^p. \label{eq:p-norm-cons} 
\end{gather}
    \label{eq:p-norm-eq}
\end{subequations}
Notice that the additional constraint \eqref{eq:p-norm-cons} is a convex constraint due to the convexity of any vector's $\ell^p$ norm. In particular, the constraint defines a convex, $p$-norm cone of dimension $(m+1)$ i.e., $P = \{(z, \bm l) \in \mathbb R^{m+1}: z \geqslant \| \bm l \|_p \}$ (see \cite{boyd2004convex}). It is known in existing literature that when convex cones are outer approximated using linear inequalities, it is always theoretically stronger and also computationally efficient to equivalently represent a high-dimensional cone into potentially many smaller-dimensional cones and outer approximate each of the small cones instead of outer approximating the high-dimensional cone directly (see \cite{lubin2018polyhedral}). Converting a high-dimensional conic constraint into many low-dimensional cones is called ``cone disaggregation'' (see \cite{lubin2018polyhedral}). To that end, we now present the sequence of steps to disaggregate the $(m+1)$ dimensional $p$-norm cone in \eqref{eq:p-norm-cons} to an $m$ three-dimensional convex cone and one single linear constraint as follows:
\begin{subequations}
\begin{gather*}
     z^p \geqslant \sum_{1 \leqslant v \leqslant m} l_v^p ~~\equiv~~ z \geqslant \sum_{1 \leqslant v \leqslant m} \left(\frac{l_v^p}{z^{p-1}}\right) \label{eq:p-norm-cone-1} \\ 
     \equiv \left\{ z = \sum_{1 \leqslant v \leqslant m} L_v \text{ and } L_v \cdot z^{p-1} \geqslant l_v^p ~~\forall 1 \leqslant v \leqslant m \right\} \\
     \equiv \left\{ z = \sum_{1 \leqslant v \leqslant m} L_v \text{ and } L_v^{\frac 1p} \cdot z^{1-\frac 1p} \geqslant l_v ~~\forall 1 \leqslant v \leqslant m \right\}
\end{gather*}
\end{subequations}
\textcolor{black}{where $L_v$, for each $v$, is an auxiliary variable representing ${l_v^p}/{z^{p-1}}$, yielding the equivalent conic constraints $L_v \cdot z^{p-1} \geqslant l_v^p$.}
In the last equivalence, the constraint $L_v^{\frac 1p} \cdot z^{1-\frac 1p} \geqslant l_v$ is convex for every $v$ and $1 < p < \infty$; in fact the constraint defines a three dimensional power cone (\cite{boyd2004convex}) defined as
\begin{flalign}
    \mathcal P_3^{\alpha, 1-\alpha} \triangleq \{ (x_1, x_2, x_3) \in \mathbb R_{\geqslant 0}^3: x_1^{\alpha} \cdot x_2^{1-\alpha} \geqslant x_3 \}. \label{eq:power-cone}
\end{flalign}
Using the equivalence, \eqref{eq:p-norm-cons} which is a $(m+1)$-dimensional $p$-norm cone can be rewritten as 
\begin{flalign}
    \left\{ z = \sum_{1 \leqslant v \leqslant m} L_v \text{ and } (L_v, z, l_v) \in \mathcal P_3^{1/p, 1-1/p} ~~\forall v \right\} \label{eq:dis-p}
\end{flalign}
that has $m$ three-dimensional power cones and one linear equality constraint. The linear outer approximation of the three-dimensional power cone is obtained as the first-order Taylor expansion of the power cone at any point $(L_v^0, z^0, l_v^0)$ that lies on the surface of the cone, i.e., $(L_v^0, z^0, l_v^0)$ satisfies the conic constraint at equality. The linear outer approximation of $(L_v, z, l_v) \in \mathcal P_3^{1/p, 1-1/p}$ at $(L_v^0, z^0, l_v^0)$ given by 
\begin{flalign}
    \left(\frac 1p\right) l_v^0 \cdot z^0 \cdot L_v + \left(1-\frac 1p\right) l_v^0 \cdot L_v^0 \cdot z \geqslant L_v^0 \cdot z^0 \cdot l_v. \label{eq:oa-p}
\end{flalign}
In the branch-and-cut algorithm, we first relax the $m$ three-dimensional power cones and add linear outer approximations of these constraints at the point on the cone's surface when the optimal solution to the relaxed problem violates any of them. The point on the cone's surface is obtained by projecting the violated solution onto the cone's surface. This process is pictorially shown in Figure \ref{fig:oa}. 
\begin{figure}
    \centering
    \includegraphics[scale=0.2,frame]{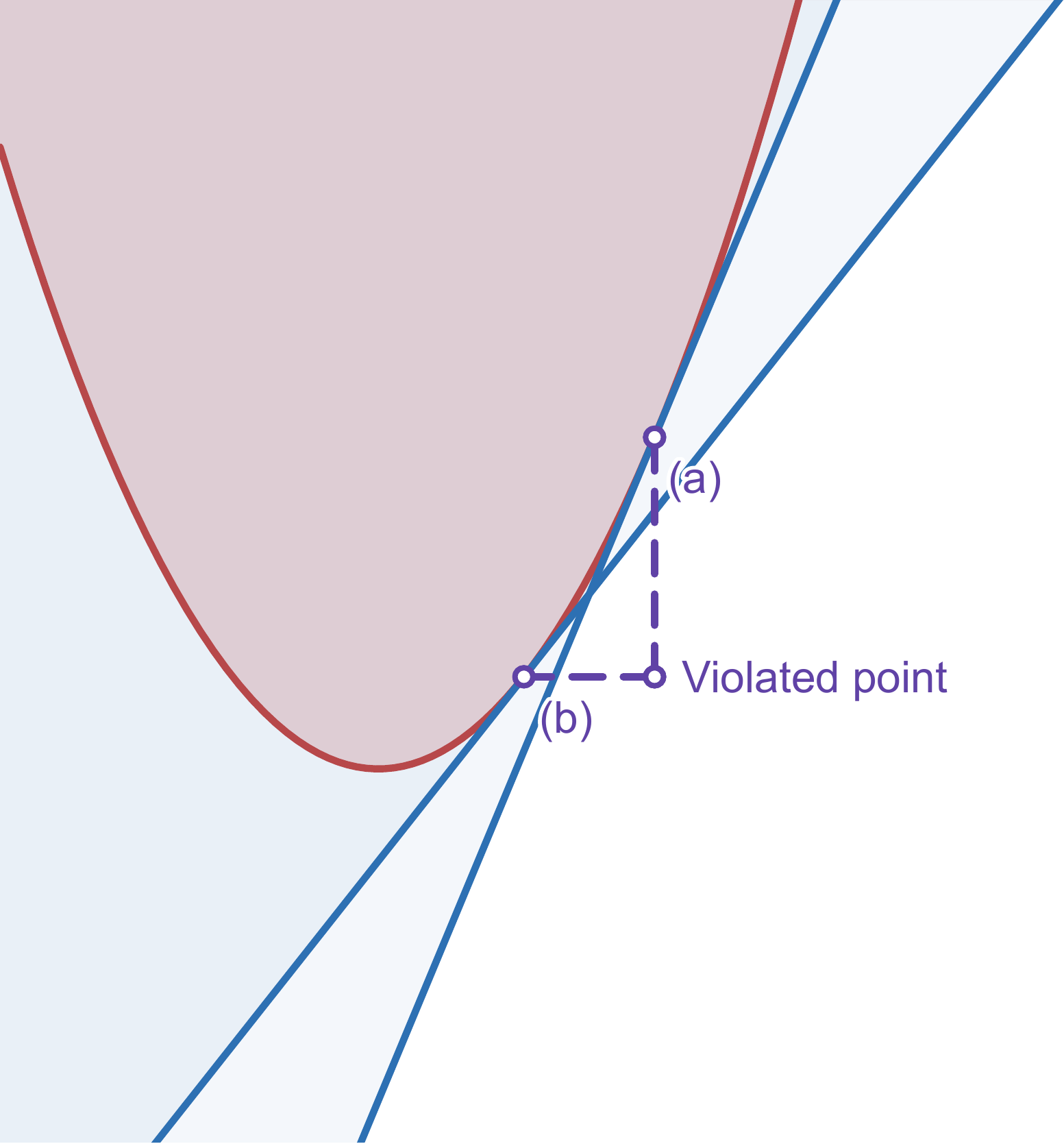}
    \caption{An illustration of the outer approximation procedure. The region shaded in red is the convex constraint, and the violated point is the solution obtained when solving the relaxed problem. This point can be projected onto the surface of the convex curve in many ways; two trivial projections (a) and (b) are shown. For each point, a tangent outer approximates the convex region (shaded in red). }
    \label{fig:oa}
\end{figure}

\subsection{$\varepsilon$-F-MTSP} \label{subsec:f-mtsp-algo}
For the $\varepsilon$-F-MTSP, the SOC constraint that enforces $\varepsilon$-fairness in the distribution of tour lengths in \eqref{eq:soc-e-fair} is outer approximated analogous to the procedure for outer approximating the $p$-norm cone constraint in \eqref{eq:p-norm-cons}. The procedure is the same if we introduce an additional variable $z$ and equivalently represent $\mathcal F^{\varepsilon}$ as 
\begin{subequations}
\begin{gather}
    (\mathcal F^{\varepsilon}): \quad \min \sum_{1 \leqslant v \leqslant m} l_v \quad \text{subject to: \eqref{eq:mtsp-l} -- \eqref{eq:mtsp-l-ge0}, } \\ 
    z \cdot \left(1-\varepsilon + \varepsilon\sqrt{m} \right) = \sum_{1 \leqslant v \leqslant m} l_v, \text{ and }
    z \geqslant \|\bm l\|_2. \label{eq:eps-mtsp-2-norm}
\end{gather}
\label{eq:eps-mtsp-eq}
\end{subequations} 
We remark that the SOC constraint in \eqref{eq:eps-mtsp-2-norm} is exactly equivalent to the $2$-norm constraint that is obtained when letting $p = 2$ in \eqref{eq:p-norm-cons} and hence, the same procedure detailed in the previous section is used to outer approximate the SOC constraint. In the next section, we formulate two bi-objective versions of the MTSP that incorporate length balancing and show how they relate to the $\varepsilon$-fair MTSP and $\Delta$-fair MTSP. 

\section{Bi-objective MTSP} \label{sec:bi-obj}
Thus far, we have formulated different variants of the MTSP that incorporate fairness in the distribution of tour lengths while minimizing the overall cost. In this section, we take a different approach by explicitly formulating two bi-objective MTSP variants, in which one objective minimizes the total tour length (``min-sum''), and the other enforces fairness in the tour-length distribution. Similar to the two fairness-based MTSP variants, $\varepsilon$-F-MTSP and $\Delta$-F-MTSP, we introduce two bi-objective formulations:

\begin{flalign}
    & (\mathcal F^{\varepsilon}_{\mathrm{bi-obj}}): \quad \min \left( \sum_{1 \leqslant v \leqslant m} l_v, ~ -\mathrm{\varepsilon FI}(\bm{l}) \right) \quad \text{subject to: \eqref{eq:mtsp-l} -- \eqref{eq:mtsp-l-ge0}}. \label{eq:eps-f-mtsp-biobj} 
\end{flalign}  

\begin{flalign}
    & (\mathcal F^{\Delta}_{\mathrm{bi-obj}}): \quad \min \left( \sum_{1 \leqslant v \leqslant m} l_v, ~ \mathrm{GC}(\bm{l}) \right) \quad \text{subject to: \eqref{eq:mtsp-l} -- \eqref{eq:mtsp-l-ge0}}. \label{eq:delta-f-mtsp-biobj} 
\end{flalign}  

In \eqref{eq:eps-f-mtsp-biobj}, the second objective aims to maximize $\mathrm{\varepsilon FI}(\bm{l})$, which is equivalent to minimizing $-\mathrm{\varepsilon FI}(\bm{l})$. This enforces fairness in the tour-length distribution, as a higher $\varepsilon$-fair index corresponds to a more balanced workload allocation among the salesmen. Similarly, in \eqref{eq:delta-f-mtsp-biobj}, fairness is promoted by minimizing the Gini coefficient of $\bm{l}$, since a lower Gini coefficient indicates a more equitable distribution of tour lengths.

A well-established approach to obtaining the Pareto front of bi-objective optimization problems is scalarization, which converts the multi-objective problem into a single-objective problem (\cite{hwang2012multiple}). There are several scalarization techniques (see \cite{miettinen1999nonlinear}); in this work, we use the $\alpha$-constraint method--\cite{miettinen1999nonlinear}. This approach enforces a lower or upper bound (denoted by $\alpha$) on one of the objectives, depending on whether the objective is to be maximized or minimized.
The following proposition establishes that applying the $\alpha$-constraint method to the fairness objectives in \eqref{eq:eps-f-mtsp-biobj} and \eqref{eq:delta-f-mtsp-biobj} results in the fairness-based MTSP formulations $\mathcal{F}^{\varepsilon}$ and $\mathcal{F}^{\Delta}$ for appropriate values of $\varepsilon$ and $\Delta$, respectively.

\begin{proposition} \label{prop:scalarization}
    Scalarizing $\mathcal F^{\varepsilon}_{\mathrm{bi-obj}}$ and $\mathcal F^{\Delta}_{\mathrm{bi-obj}}$ using the $\alpha$-constraint method (see \cite{miettinen1999nonlinear}) results in the optimization problems $\mathcal F^{\varepsilon}$ and $\mathcal F^{\Delta}$ for appropriate values of $\varepsilon$ and $\Delta$, respectively. 
\end{proposition}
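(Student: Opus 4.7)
The plan is to apply the $\alpha$-constraint scalarization definition directly to each bi-objective formulation and verify, via the explicit formulas \eqref{eq:efi} for $\mathrm{\varepsilon FI}(\bm l)$ and \eqref{eq:gi} for $\mathrm{GC}(\bm l)$, that the resulting single-objective problems are, respectively, $\mathcal F^{\varepsilon}$ in \eqref{eq:eps-f-mtsp} and $\mathcal F^{\Delta}$ in \eqref{eq:delta-f-mtsp} under the identifications $\varepsilon = \alpha$ and $\Delta = \alpha$.

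First I would recall the $\alpha$-constraint method: given a bi-objective minimization problem $\min (f_1(\bm x),\ f_2(\bm x))$ over a feasible set $X$, one selects an objective (say $f_2$) to convert into a constraint of the form $f_2(\bm x) \leqslant \alpha$ and then solves $\min\{f_1(\bm x) : \bm x \in X,\ f_2(\bm x) \leqslant \alpha\}$; varying $\alpha$ traces out (weakly) Pareto-optimal points. For $\mathcal F^{\varepsilon}_{\mathrm{bi-obj}}$ in \eqref{eq:eps-f-mtsp-biobj}, I would retain $\sum_v l_v$ as the primary objective and impose $-\mathrm{\varepsilon FI}(\bm l) \leqslant -\alpha$, i.e., $\mathrm{\varepsilon FI}(\bm l) \geqslant \alpha$. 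Substituting the definition \eqref{eq:efi} gives
\begin{gather*}
    \alpha \;\leqslant\; \frac{1}{\sqrt{m}-1}\left(\frac{\|\bm l\|_1}{\|\bm l\|_2}-1\right),
\end{gather*}
which is exactly the at-least-$\varepsilon$-fair constraint \eqref{eq:soc-e-fair} with $\varepsilon = \alpha$. Hence the scalarized problem coincides with $\mathcal F^{\varepsilon}$, establishing the claim for the SOC-based variant.

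For $\mathcal F^{\Delta}_{\mathrm{bi-obj}}$ in \eqref{eq:delta-f-mtsp-biobj}, I would again retain $\sum_v l_v$ and impose $\mathrm{GC}(\bm l) \leqslant \alpha$. Using \eqref{eq:gi} and clearing the positive denominator $(m-1)\sum_i l_i$ (nonnegative by \eqref{eq:mtsp-l-ge0}, and strictly positive in any nontrivial instance) yields
\begin{gather*}
    \sum_{1\leqslant i < j \leqslant m} |l_i - l_j| \;\leqslant\; \alpha \cdot (m-1)\cdot \sum_{i=1}^m l_i,
\end{gather*}
which is precisely \eqref{eq:gi-ub} with $\Delta = \alpha$. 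Appealing to the reordering argument leading to \eqref{eq:linear-gi} and noting that imposing \eqref{eq:ordering} does not cut feasible MTSP solutions (as explained right after \eqref{eq:linear-gi}), the scalarized problem is exactly $\mathcal F^{\Delta}$ with $\Delta = \alpha$.

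The argument is essentially a direct unfolding of definitions; the only mild subtlety is the sign convention (the fairness objective is $-\mathrm{\varepsilon FI}$ so that both components are to be minimized, which forces the inequality $\mathrm{\varepsilon FI}(\bm l)\geqslant \alpha$ upon scalarization) and the restriction of $\alpha$ to the natural range $[0,1]$ so that the identifications $\varepsilon=\alpha$ and $\Delta=\alpha$ lie in the parameter domains of $\mathcal F^{\varepsilon}$ and $\mathcal F^{\Delta}$, respectively. I expect no real obstacle; the entire proof fits in a few lines of symbolic manipulation once the $\alpha$-constraint method is spelled out.
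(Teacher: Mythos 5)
Your proposal is correct and follows essentially the same route as the paper: apply the $\alpha$-constraint method to the fairness objective, unfold the definitions \eqref{eq:efi} and \eqref{eq:gi}, and recognize the resulting constraints as \eqref{eq:soc-e-fair} and \eqref{eq:linear-gi}. The only cosmetic difference is the sign convention (you bound $-\mathrm{\varepsilon FI}(\bm l)$ by $-\alpha$ so that $\varepsilon=\alpha\in[0,1]$, whereas the paper bounds it by $\alpha$ and sets $\varepsilon=-\alpha$ with $\alpha\in[-1,0]$), and you additionally write out the $\Delta$ case that the paper omits for brevity.
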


\begin{proof}
    We present the proof for $\mathcal F^{\varepsilon}_{\mathrm{bi-obj}}$; the proof for $\mathcal F^{\Delta}_{\mathrm{bi-obj}}$ follows similarly and is omitted for brevity. Applying the $\alpha$-constraint method to the fairness objective in $\mathcal F^{\varepsilon}_{\mathrm{bi-obj}}$ results in the following optimization problem:
    \begin{flalign*}
    & ~~ \min \sum_{1 \leqslant v \leqslant m} l_v \quad \text{subject to: \eqref{eq:mtsp-l} -- \eqref{eq:mtsp-l-ge0}, \text{ and } } -\mathrm{\varepsilon FI}(\bm{l}) \leqslant \alpha \\ 
    \Rightarrow & ~~ \min \sum_{1 \leqslant v \leqslant m} l_v \quad \text{subject to: \eqref{eq:mtsp-l} -- \eqref{eq:mtsp-l-ge0}, \text{ and } } -\alpha \leqslant \frac{1}{\sqrt{m}-1} \left(\frac{\|\bm{l}\|_1}{\|\bm{l}\|_2} -1 \right) \\ 
    \Rightarrow & ~~ \min \sum_{1 \leqslant v \leqslant m} l_v \quad \text{subject to: \eqref{eq:mtsp-l} -- \eqref{eq:mtsp-l-ge0}, \text{ and } } \left( 1 - (-\alpha) + (-\alpha) \sqrt{m} \right) \cdot \|\bm l\|_2 \leqslant \|\bm l\|_1 \\ 
    \equiv & ~~ (\mathcal F^{\varepsilon}) \text{ with $\varepsilon = -\alpha$}.  
    \end{flalign*}  
    The first and second implications follow from Definition \ref{def:eps-fairness}. During the scalarization process, $\alpha$ must be chosen to be consistent with the feasibility of the constraint $-\mathrm{\varepsilon FI}(\bm{l}) \leqslant \alpha$, i.e.,  $\alpha \in [-1, 0]$. When $\alpha = 0$, no fairness on the distribution of tour lengths is enforced, while $\alpha = -1$ enforces complete fairness by ensuring equal tour lengths, aligning with the development of $\varepsilon$-fairness in Section \ref{sec:mtsp}.
\end{proof}
By solving the scalarized problem for all feasible values of $\alpha$, we can compute the Pareto front, which characterizes the trade-off between fairness in tour length distribution and total tour length (a measure of efficiency). Furthermore, the solution algorithms presented in Section \ref{sec:algo} can be directly applied to solve these scalarized formulations and obtain the Pareto front. To the best of our knowledge, this is the first work in the literature that explicitly enables the computation of the Pareto front between fairness and efficiency in the MTSP.
In the next section, we present the results of extensive computational experiments that elucidate the pros and cons of every variant of the MTSP presented in this article. 

\section{Computational results} \label{sec:results}
In this section, we present the computational results for the proposed algorithm to solve Fair-MTSP and p-norm MTSP, and we show its advantages over min-max MTSP. We begin by providing details of the test instances. A total of $59$ instances were chosen for the computational experiments. 36 instances were derived from 12 TSPLIB benchmark instances in \cite{reinelt1991tsplib} that consists of various single vehicle TSP instances, 22 instances were taken from the CVRPLIB benchmark library in \url{https://galgos.inf.puc-rio.br/cvrplib/index.php/en/instances} and one instance with 50 targets was generated using the road network data in the suburb of Seattle using OpenStreetMap to showcase the practicality of this work; we refer to this instance as ``Seattle''. 
For all 12 TSPLIB instances, the depot location was chosen as the centroid of the target locations, and the number of salesmen $m$ was set to $ m \in \{3, 4, 5\}$. This yielded a total of 36 instances, derived from 12 TSPLIB instances. The CVRPLIB benchmark instances are multi-vehicle, capacitated vehicle routing problems with depot locations. For the 22 instances chosen from this library, the vehicle capacities were ignored, and the number of salesmen $m$ was set to the number of vehicles. The final 50-target instance, ``Seattle'', was designed to show the usefulness of the formulations proposed in this paper in a practical setting. We refer the reader to Section \ref{subsec:real} for more details on this particular instance. All algorithms were implemented in Kotlin, with CPLEX 22.1 as the underlying branch-and-cut solver, and all computational experiments were run on an Intel Haswell 2.6 GHz, 32 GB, 20-core machine. Furthermore, we impose a computational time limit of 1 hour on any run of the branch-and-cut algorithms presented in this paper. 
Finally, we remark that the implementations of the algorithms for all variants of the MTSP and the F-MTSP are open-sourced and available at \texttt{\url{https://github.com/kaarthiksundar/FairMTSP}}.

\subsection{Computation time study} \label{subsec:time} 
This study reports the results for the 36 TSPLIB and 22 CVRPLIB instances. The instance-by-instance results can be found in the \ref{app:runtime}. In particular, the Tables \ref{tab:pNorm}, \ref{tab:epsFair}, and \ref{tab:deltaFair} summarize the computation times for the different variants of the MTSP formulated in this article for the TSPLIB instances, and the Tables \ref{tab:pNorm_vrp}, \ref{tab:epsFair_vrp}, and \ref{tab:deltaFair_vrp} summarize the computation times for the CVRPLIB instances. The naming convention used in the Tables for the instances is ``name-m'' where ``name``corresponds to the name of the TSPLIB or CVRPLIB instance and $m$ represents the number of salesmen. For each of the 58 instances, we solve the min-sum MTSP ($\mathcal F_1$), the min-max MTSP ($\mathcal F_{\infty}$), the $p$-norm MTSP ($\mathcal F_p$) for $p \in \{2, 3, 5, 10\}$, the $\varepsilon$-F-MTSP ($\mathcal F^{\varepsilon}$) for $\varepsilon \in \{0.1, 0.3, 0.5, 0.7, 0.9\}$ and the $\Delta$-F-MTSP ($\mathcal F^{\Delta}$) for $\Delta \in \{0.1, 0.3, 0.5, 0.7, 0.9\}$. The tables report the computation times for all aforementioned variants of the MTSP in seconds. If the computation time exceeds the one-hour limit, the corresponding entry is marked ``TO'' to indicate that the run timed out. For each instance labeled TO, the number in parentheses represents the optimality gap at termination. 

\textcolor{black}{The statistics of the results in \ref{app:runtime} are provided in Table \ref{tab:run-time-stats}. The column ``solved'' reports how many instances (out of 58) were completed within the computational time limit, and ``timeout'' gives the fraction that exceeded it. Runtime statistics are computed over solved instances, where ``median'' denotes the typical runtime, while ``P25'' and ``P75'' denote the lower and upper quartiles, respectively, capturing easier and harder cases. Finally, ``PAR'' (see \eqref{eq:par1}) denotes the penalized average runtime, in which unsolved instances are assigned a penalty equal to the time limit, yielding a single metric that accounts for both speed and robustness.
\begin{equation}
    \mathrm{PAR} \;=\; \frac{1}{N} \sum_{i=1}^{N}
    \begin{cases}
    t_i, & \text{if instance } i \text{ is solved}, \\
    3600, & \text{if instance } i \text{ times out}.
    \end{cases} \label{eq:par1}
\end{equation}}

\setlength{\tabcolsep}{10pt}
\begin{table}[htbp]
    \centering
    \caption{Runtime statistics over 58 instances.}
    \label{tab:run-time-stats}
    \addtolength{\tabcolsep}{-0.4em}
    \begin{tabular}{lrrrrrr}
     \toprule
    \multirow{2}{*}{formulation} & \multirow{2}{*}{\# solved} & \multirow{2}{*}{timeout (\%)} & \multicolumn{4}{c}{computation time in seconds} \\
    \cmidrule{4-7}
     &  & & median & P25 & P75 & PAR \\
    \midrule 
    \csvreader[head to column names, late after line=\\]{tables/runtime_stats.csv}{}
    {\csvcoli & \csvcolii & \csvcoliii & \csvcoliv & \csvcolv & \csvcolvi & \csvcolvii}
    \bottomrule
\end{tabular}
\end{table}

\textcolor{black}{The summary of the results is as follows: 
\begin{enumerate}[label=(\roman*)]
    \item The min-max MTSP is the most computationally demanding formulation, exhibiting the longest runtimes, the largest spread, and the worst robustness (higher timeout rates and PAR), which supports the motivation for alternative fairness-driven formulations. 
    \item The $p$-norm MTSP performs comparably to the other fairness-based formulations, $\varepsilon$-F-MTSP and $\Delta$-F-MTSP, on easier instances (similar P25 and median runtimes), but shows degraded performance on harder instances, as reflected in higher P75 values and PAR, indicating weaker scalability. In $p$-norm MTSP formulations, there is mild sensitivity to the choice of $p$. As $p$ increases, the formulations become progressively more difficult to solve, as evidenced by fewer solved instances, higher timeout rates, and worsening PAR values. However, central runtime statistics (median, P25, P75) do not exhibit a clear monotonic trend, indicating that the impact of $p$ is more pronounced in robustness than in typical-case runtime.
    \item  The $\varepsilon$-F-MTSP and $\Delta$-F-MTSP exhibit similar central tendencies and variability (median, P25, P75), confirming no clear dominance in raw runtime. However, the $\varepsilon$-F-MTSP demonstrates better robustness, with a lower timeout percentage and improved PAR, making it preferable when solution reliability within the time limit is the primary concern. This is also evident from the detailed results reported in the  Table pairs (\ref{tab:epsFair},  \ref{tab:deltaFair}) and (\ref{tab:epsFair_vrp}, \ref{tab:deltaFair_vrp}).
\end{enumerate}}

\subsection{Pareto front and cost of fairness examination} \label{subsec:cof}
For this study, we focus on the ``eil51'' instance with $m = 5$. The omission of other instances is intentional, as the trends observed for ``eil51'' are consistent across all instances. The choice of this particular instance is also arbitrary. The objective of this section is to compute the Pareto front for the bi-objective MTSP formulations $\mathcal{F}_{\mathrm{bi-obj}}^{\varepsilon}$ and $\mathcal{F}_{\mathrm{bi-obj}}^{\Delta}$. To achieve this, we apply the scalarization technique outlined in Proposition \ref{prop:scalarization}, varying the values of $\alpha$ to construct the Pareto front. The Pareto fronts for $ \mathcal{F}^{\varepsilon}_{\mathrm{bi-obj}} $ and $ \mathcal{F}^{\Delta}_{\mathrm{bi-obj}} $ are presented in Figure \ref{fig:pareto}. From the plot, it is evident that the "min-sum" and "min-max" solutions represent the two extreme cases for the MTSP. 

\begin{figure}[htbp]
    \centering
    \includegraphics[scale=1.0]{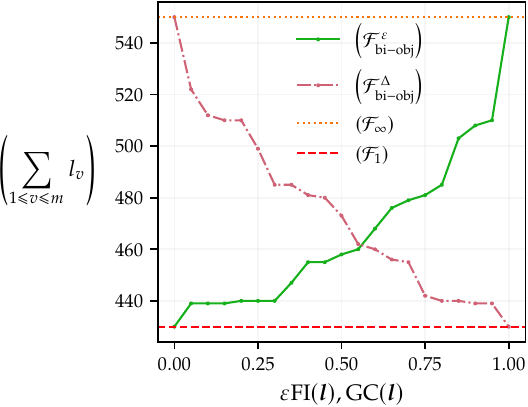}
    \caption{The Pareto fronts for the two bi-objective formulations, the total tour length for the ``min-sum'' and ``min-max'' MTSP for the eil51 instance with $m=5$ are shown.}
    \label{fig:pareto}
\end{figure}

Next, we solve the min-sum MTSP, min-max MTSP, $\varepsilon$-F-MTSP and the $\Delta$-F-MTSP for $\varepsilon$ and $\Delta$ values starting from $0$ to $1$ in steps of $0.05$; for both the min-max and the fair versions of the MTSP, we compute the cost of fairness as defined by \eqref{eq:cof}. Figure \ref{fig:cof} shows the COF values as we increase the values of $\varepsilon$ and $\Delta$. Notice that the COF of the min-max MTSP solution, $\mathrm{COF}(\mathcal F_{\infty})$, is a fixed value, whereas for the  $\varepsilon$-F-MTSP (resp. $\Delta$-F-MTSP) the COF monotonically increases (resp. decreases) with $\varepsilon$ (resp. $\Delta$) values. This monotonicity property is theoretically guaranteed by  Corollary \ref{cor:cof} and Proposition \ref{prop:delta-monotonicity}, respectively. Figure \ref{fig:cof} is an important tool for practitioners to understand the trade-off between efficiency, measured by total travel distance, and fairness in the distribution of individual tour lengths. One main advantage of the $\varepsilon$-F-MTSP and the $\Delta$-F-MTSP is that they provide a range of solutions with different COF values that a practitioner can choose from, as opposed to just one solution that has a maximum level of fairness provided by the min-max MTSP. We also note that the similarity between the two plots in Figures \ref{fig:pareto} and \ref{fig:cof}, aside from differences in scale, is not a coincidence. This is because the expression for COF in \eqref{eq:cof} and the sum of the tour lengths are related by an affine transformation. 
\begin{figure}[htbp]
    \centering
    \includegraphics[scale=1.0]{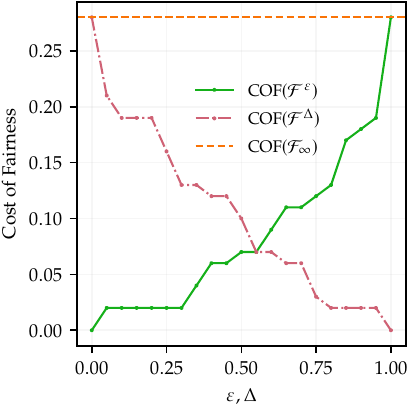}
    \caption{Cost of fairness of the optimal solutions of $\varepsilon$-F-MTSP and $\Delta$-F-MTSP as we vary $\varepsilon$ and $\Delta$ from 0 to 1 for the eil51 instance with $m=5$.}
    \label{fig:cof}
\end{figure}

\subsection{Direct comparison of F-MTSP variants with min-max MTSP} 
To perform a direct comparison, we solve the min-max MTSP $\mathcal F_{\infty}$, compute $\varepsilon(\mathcal F_{\infty})$ and $\Delta(\mathcal F_{\infty})$ using the optimal solution of the min-max MTSP, i.e., $\bm l^*(\mathcal F_{\infty})$, as 
\begin{gather}
    \varepsilon(\mathcal F_{\infty}) \triangleq \varepsilon\mathrm{FI}(\bm l^*(\mathcal F_{\infty})) \label{eq:eps-min-max} \\ 
    \Delta(\mathcal F_{\infty}) \triangleq \mathrm{GC}(\bm l^*(\mathcal F_{\infty})) \label{eq:delta-min-max}
\end{gather}
and solve the $\varepsilon$-F-MTSP and $\Delta$-F-MTSP using $\varepsilon = \varepsilon(\mathcal F_{\infty})$ and $\Delta = \Delta(\mathcal F_{\infty})$, respectively. Essentially, we calculate the level of fairness in the optimal solution to the min-max MTSP, $\bm l^*(\mathcal F_{\infty})$, using the $\varepsilon$-fair index in \eqref{eq:efi} and Gini coefficient in \eqref{eq:gi} and solve $\mathcal F^{\varepsilon}$ and $\mathcal F^{\Delta}$ with the calculated values. \textcolor{black}{Table \ref{tab:min-max-fair} reports both the maximum tour cost and the sum of all tour costs obtained by the above procedure for different instances. The instances shown correspond to those for which the min–max MTSP was solved to optimality within the one-hour time limit. Since the min-max formulation explicitly minimizes the maximum tour cost, this metric serves as the primary measure of fairness, while the sum of tour costs provides insight into overall efficiency. We note that in Table \ref{tab:min-max-fair}, the $\Delta$-F-MTSP variant timed out for the $\Delta$ value of $0.00384$ for the instance ``att48-4'', indicating that achieving the same level of fairness as the min-max MTSP can still be computationally challenging using the proposed fair variants. Nevertheless, solutions with slightly relaxed fairness levels can be obtained with significantly lower computational effort, as discussed in Section \ref{subsec:time}. An important takeaway from Table \ref{tab:min-max-fair} is that the F-MTSP variants can attain comparable maximum tour costs (i.e., similar fairness levels) while yielding a lower total tour cost, demonstrating a more favorable trade-off between fairness and overall efficiency compared to the min-max MTSP.}

\setlength{\tabcolsep}{10pt}
\begin{table}[htbp]
    \centering
    \caption{Objective value comparison between min-max and fair variants of the MTSP.}
    \label{tab:min-max-fair}
    \addtolength{\tabcolsep}{-0.5em}
    \begin{tabular}{lrrrrrrrr}
    \toprule
    \multirow{2}{*}{name} & \multirow{2}{*}{$\|\bm l^*(\mathcal F^{\infty})\|_1$} & \multirow{2}{*}{$\|\bm l^*(\mathcal F^{\infty})\|_\infty$} & \multicolumn{2}{c}{value of} & \multirow{2}{*}{$\|\bm l^*(\mathcal F^{\varepsilon})\|_1$} & \multirow{2}{*}{$\|\bm l^*(\mathcal F^{\Delta})\|_1$} & \multirow{2}{*}{$\|\bm l^*(\mathcal F^{\varepsilon})\|_\infty$} & \multirow{2}{*}{$\|\bm l^*(\mathcal F^{\Delta})\|_\infty$} \\
    \cmidrule{4-5}
    & & & $\varepsilon(\mathcal F^{\infty})$ & $\Delta(\mathcal F^{\infty})$ &  & & & \\
    \cmidrule{1-9}
    \csvreader[head to column names, late after line=\\]{tables/minmaxFair_final.csv}{}
    {\csvcoli-\csvcolii & \csvcoliii & \csvcolvii & \csvcolv & \csvcolvi & \csvcolviii & \dashToTO{\csvcolxi} & \csvcolx & \dashToTO{\csvcolxiii} }
    \bottomrule
\end{tabular}
\end{table}

\begin{figure}[htbp]
    \centering
    \includegraphics[scale=1.0]{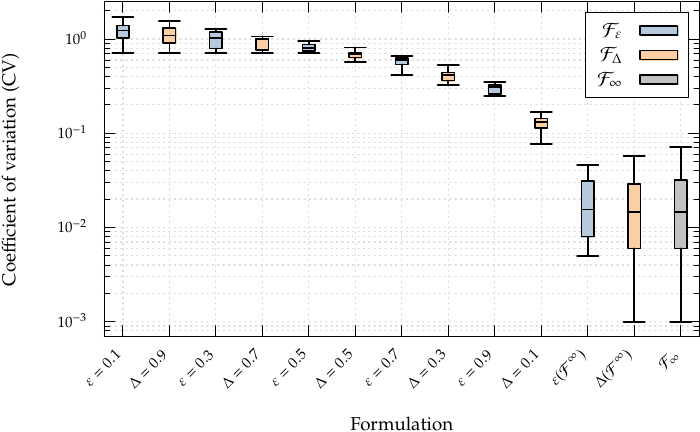}
    \caption{Boxplot of coefficient of variation (CV) values of tour lengths across formulations and parameter settings for all the instances in Table \ref{tab:min-max-fair}.}
    \label{fig:cv-boxplot}
\end{figure}
\textcolor{black}{To further quantify the fairness--efficiency trade-off, Figure~\ref{fig:cv-boxplot} reports the distribution of the coefficient of variation (CV) of tour lengths across formulations on a logarithmic scale. The results clearly show that $\mathcal{F}_{\infty}$ (the min-max MTSP) achieves the lowest CV values, confirming its role in enforcing the strongest fairness. Both $\varepsilon$-F-MTSP and $\Delta$-F-MTSP exhibit a consistent and monotonic trend: increasing $\varepsilon$ (and decreasing $\Delta$) systematically reduces the CV, yielding progressively more balanced tours. For moderate parameter values, these formulations already attain CV levels close to $\mathcal{F}_{\infty}$. Moreover, when evaluated at $\varepsilon(\mathcal{F}_{\infty})$ and $\Delta(\mathcal{F}_{\infty})$, both variants closely match the CV of $\mathcal{F}_{\infty}$, demonstrating that the same level of fairness can be recovered without incurring the full computational burden of the min-max MTSP formulation. Overall, Figure~\ref{fig:cv-boxplot} demonstrates that the proposed fair variants provide a precise and effective mechanism for achieving near-min-max fairness while improving computational efficiency.}

\subsection{A practical use case - electric vehicle fleet management} \label{subsec:real}
This section demonstrates the practical utility of the F-MTSP formulation. Suppose we have a fleet of 4 electric vehicles stationed at a depot for package deliveries in a Seattle suburb. The objective is to find routes for each vehicle that minimize the total travel distance while ensuring a fair distribution of travel distances among vehicles. Enforcing that the distribution of vehicles' travel distances is fair is useful in this context, as it ensures that all vehicles have equitable battery usage. This problem can be directly modeled as an MTSP. We now qualitatively compare the optimal solutions provided by the different variants of the MTSP in Figure \ref{fig:seattle}. 

\begin{figure*}[htbp]
\centering
  \begin{subfigure}[t]{.3\linewidth}
    \centering\includegraphics[scale=1]{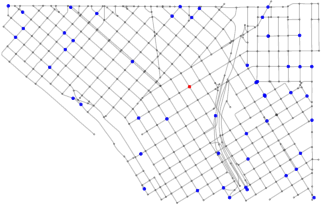}
    \caption{Seattle instance}
    \label{fig:seattle-target}
  \end{subfigure} \hspace{3ex}
  \begin{subfigure}[t]{.3\linewidth}
    \centering\includegraphics[scale=1]{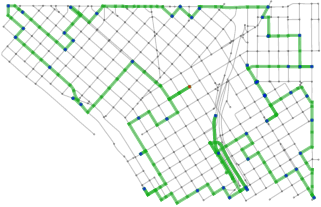}
    \caption{Min-sum MTSP (time: 17.24 Section)}
    \label{fig:seattle-min}
  \end{subfigure}\hspace{3ex}
  \begin{subfigure}[t]{.3\linewidth}
    \centering\includegraphics[scale=1]{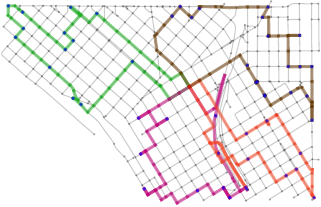}
    \caption{Min-max MTSP (time: 633.83 Section)}
    \label{fig:seattle-minmax}
  \end{subfigure} \\\vspace{2ex}
  \begin{subfigure}[t]{.3\linewidth}
    \centering\includegraphics[scale=1]{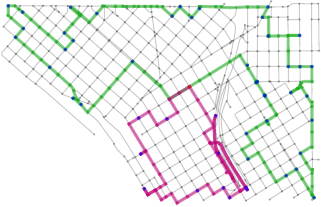}
    \caption{$\mathcal F^{\Delta}$ with $\Delta = 0.9$ (time: 30.3 Section)}
    \label{fig:seattle-delta-9}
  \end{subfigure} \hspace{3ex}
  \begin{subfigure}[t]{.3\linewidth}
    \centering\includegraphics[scale=1]{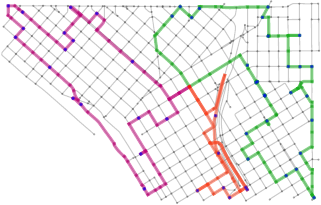}
    \caption{$\mathcal F^{\Delta}$ with $\Delta = 0.5$ (time: 50.32 Section)}
    \label{fig:seattle-delta-5}
  \end{subfigure}\hspace{3ex}
  \begin{subfigure}[t]{.3\linewidth}
    \centering\includegraphics[scale=1]{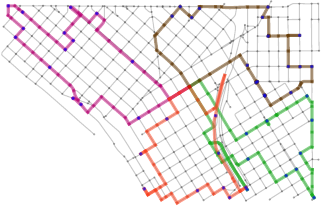}
    \caption{$\mathcal F^{\Delta}$ with $\Delta = 0.1$ (time: 153.14 Section)}
    \label{fig:seattle-delta-1}
  \end{subfigure} \\\vspace{2ex}
  \begin{subfigure}[t]{.3\linewidth}
    \centering\includegraphics[scale=1]{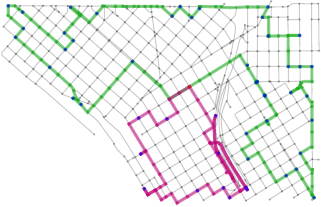}
    \caption{$\mathcal F^{\varepsilon}$ with $\varepsilon = 0.1$ (time: 34.32 Section)}
    \label{fig:seattle-eps-1}
  \end{subfigure} \hspace{3ex}
  \begin{subfigure}[t]{.3\linewidth}
    \centering\includegraphics[scale=1]{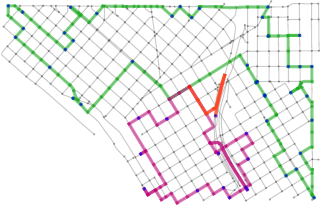}
    \caption{$\mathcal F^{\varepsilon}$ with $\varepsilon = 0.5$ (time: 91.2 Section)}
    \label{fig:seattle-eps-5}
  \end{subfigure}\hspace{3ex}
  \begin{subfigure}[t]{.3\linewidth}
    \centering\includegraphics[scale=1]{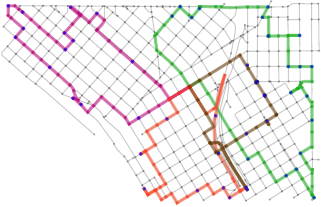}
    \caption{$\mathcal F^{\varepsilon}$ with $\varepsilon = 0.9$ (time: 271.52 Section)}
    \label{fig:seattle-eps-9}
  \end{subfigure}
  \caption{(a) shows the graph of the ``Seattle'' instance with the red dot representing the depot and the blue dots representing the different targets. $4$ electric vehicles are initially stationed at the depot. The path for each vehicle obtained by solving the different variants of the MTSP is shown in different colors in (b)--(i). (b) and (c) show the optimal min-sum and min-max MTSP tours, respectively. The second and third rows show optimal solutions for the $\Delta$-F-MTSP and $\varepsilon$-F-MTSP, respectively, with different values of $\Delta$ and $\varepsilon$. The time taken to compute the optimal solution in all the illustrations is provided in parentheses.}
  \label{fig:seattle}
\end{figure*}

Note that the optimal solution to the min-sum MTSP (Figure \ref{fig:seattle-min}) uses only one vehicle to perform all deliveries, as it imposes no fairness constraints on the distribution of each vehicle's travel distance. From a pure minimize travel distance standpoint, it is always better to only use one vehicle to visit all the targets as long as the triangle inequality is satisfied, and this is exactly what is observed in the optimal min-sum MTSP solution. The optimal min-max MTSP solution uses all four vehicles and qualitatively has a fair distribution of the vehicles' travel distances (Figure \ref{fig:seattle-minmax}). Solutions with a similar distribution of tour lengths are obtained by solving the $\Delta$-F-MTSP with $\Delta = 0.1$ (Figure \ref{fig:seattle-delta-1}) and $\varepsilon$-F-MTSP with $\varepsilon = 0.9$ (Figure \ref{fig:seattle-eps-9}) with much lower computation times. Finally, as we either increase $\varepsilon$ (Figures \ref{fig:seattle-eps-1}--\ref{fig:seattle-eps-9}) or decrease $\Delta$ (Figures \ref{fig:seattle-delta-9}--\ref{fig:seattle-delta-1}), the solutions qualitatively become fairer and the computation time to find the fairer solutions increases.

\section{Conclusions and way forward} \label{sec:conclusion}
\textcolor{black}{This work proposes different variants of the MTSP to incorporate fairness in the distribution of tour lengths. From a modeling perspective, these variants can be categorized into two classes. The first class consists of objective-based formulations, namely the min-max MTSP and the $p$-norm MTSP, which directly modify the objective function to promote fairness. The second class comprises constraint-based formulations, namely the $\varepsilon$-fair MTSP and the $\Delta$-fair MTSP, which retain the classical min-sum objective while enforcing fairness via additional constraints. The first two formulations exist in the literature, while the latter two are the novel contributions of this work. The work also develops a custom branch-and-cut algorithm to solve all the variants. The major takeaways of this work can be summarised as follows:}
\begin{enumerate}[label=(\roman*)]
    \item The min-max MTSP can achieve fairness in the distribution of tour lengths at the cost of high computation time. This is well known in the literature, and the results of the computational experiments are consistent with this observation.
    \item \textcolor{black}{Among the objective-based formulations, the $p$-norm MTSP with $p=2$ provides a practical alternative to the min-max MTSP in terms of both computation time and fairness. Among the constraint-based formulations, the $\varepsilon$-F-MTSP with a large $\varepsilon$ or the $\Delta$-F-MTSP with a small $\Delta$ achieve comparable fairness while incurring significantly lower computational effort. Furthermore, the min-max MTSP is equivalent to the $p$-norm MTSP when $p = \infty$.}
    \item All three variants, namely the $p$-norm MTSP, the $\varepsilon$-F-MTSP, and the $\Delta$-F-MTSP, can result in a family of solutions with different levels of the trade-off between the sum of tour lengths and fairness in the distribution of tour lengths when the values of $p$, $\varepsilon$, and $\Delta$ are varied in their respective domains, with the $\varepsilon$-F-MTSP and $\Delta$-F-MTSP having a significant edge in terms of computation time for larger test cases. Additionally, the $\varepsilon$-F-MTSP and the $\Delta$-F-MTSP have theoretical properties that conform with our intuitive understanding of the inherent trade-off between fairness and efficiency, while the $p$-norm does not have such properties.
    \item The algorithms developed for the $\varepsilon$-F-MTSP and the $\Delta$-F-MTSP can be used to compute the Pareto front of the two bi-objective MTSP formulations $\mathcal{F}^{\varepsilon}_{\mathrm{bi-obj}}$ and $\mathcal{F}^{\Delta}_{\mathrm{bi-obj}}$ with both the efficiency and fairness objectives. 
    \item Finally, empirical results suggest that the computation time of any of the fair variants of the MTSP proposed in this work increases as one tries to make the distributions of tour lengths fairer. 
\end{enumerate}

Future work may focus on developing fast heuristics and approximation algorithms for the fair variants of the MTSP. Furthermore, leveraging a general model that enforces a set of decision variable values to be fair across application domains such as scheduling, supply chain management, robotics, and energy systems will be an exciting research direction. \\

\noindent \textbf{Disclosure of interest} - The authors declare that they have no competing interests. \\

\noindent \textbf{Funding} - No funding was received to carry out this work. \\

\noindent \textbf{Data availability statement} - The data and source code to reproduce all the results are made available in the GitHub repository \url{https://github.com/kaarthiksundar/FairMTSP}. 

\printbibliography

@article{francca1995m,
  title={The m-traveling salesman problem with minmax objective},
  author={Fran{\c{c}}a, Paulo M and Gendreau, Michel and Laporte, Gilbert and M{\"u}ller, Felipe M},
  journal={Transportation Science},
  volume={29},
  number={3},
  pages={267--275},
  year={1995},
  publisher={INFORMS}
}

@article{murray2020multiple,
  title={The multiple flying sidekicks traveling salesman problem: Parcel delivery with multiple drones},
  author={Murray, Chase C and Raj, Ritwik},
  journal={Transportation Research Part C: Emerging Technologies},
  volume={110},
  pages={368--398},
  year={2020},
  publisher={Elsevier}
}

@article{sundar2022branch,
  title={A branch-and-price algorithm for a team orienteering problem with fixed-wing drones},
  author={Sundar, Kaarthik and Sanjeevi, Sujeevraja and Montez, Christopher},
  journal={EURO Journal on Transportation and Logistics},
  volume={11},
  pages={100070},
  year={2022},
  publisher={Elsevier}
}

@inproceedings{sundar2015exact,
  title={An exact algorithm for a heterogeneous, multiple depot, multiple traveling salesman problem},
  author={Sundar, Kaarthik and Rathinam, Sivakumar},
  booktitle={2015 International Conference on Unmanned Aircraft Systems (ICUAS)},
  pages={366--371},
  year={2015},
  organization={IEEE}
}

@article{bektas2006multiple,
  title={The multiple traveling salesman problem: an overview of formulations and solution procedures},
  author={Bekta\c{s}, Tolga},
  journal={Omega},
  volume={34},
  number={3},
  pages={209--219},
  year={2006},
  publisher={Elsevier}
}

@article{venkatachalam2018two,
  title={A two-stage approach for routing multiple unmanned aerial vehicles with stochastic fuel consumption},
  author={Venkatachalam, Saravanan and Sundar, Kaarthik and Rathinam, Sivakumar},
  journal={Sensors},
  volume={18},
  number={11},
  pages={3756},
  year={2018},
  publisher={MDPI}
}

@article{otto2018optimization,
  title={Optimization approaches for civil applications of unmanned aerial vehicles ({UAV}s) or aerial drones: {A} survey},
  author={Otto, Alena and Agatz, Niels and Campbell, James and Golden, Bruce and Pesch, Erwin},
  journal={Networks},
  volume={72},
  number={4},
  pages={411--458},
  year={2018},
  publisher={Wiley Online Library}
}

@article{VRPDroneDelivery2016,
	title        = {Vehicle routing problems for drone delivery},
	author       = {Dorling, Kevin and Heinrichs, Jordan and Messier, Geoffrey G and Magierowski, Sebastian},
	year         = 2016,
	journal      = {IEEE Transactions on Systems, Man, and Cybernetics: Systems},
	publisher    = {IEEE},
	volume       = 47,
	number       = 1,
	pages        = {70--85}
}

@article{DubinsTSP2008,
	title        = {Traveling salesperson problems for the Dubins vehicle},
	author       = {Savla, Ketan and Frazzoli, Emilio and Bullo, Francesco},
	year         = 2008,
	journal      = {IEEE Transactions on Automatic Control},
	publisher    = {IEEE},
	volume       = 53,
	number       = 6,
	pages        = {1378--1391}
}

@article{RathinamDubinsTSP2007,
	title        = {A resource allocation algorithm for multivehicle systems with nonholonomic constraints},
	author       = {Rathinam, Sivakumar and Sengupta, Raja and Darbha, Swaroop},
	year         = 2007,
	journal      = {IEEE Transactions on Automation Science and Engineering},
	publisher    = {IEEE},
	volume       = 4,
	number       = 1,
	pages        = {98--104}
}

@inproceedings{scott2020market,
  title={Market approach to length constrained min-max multiple depot multiple traveling salesman problem},
  author={Scott, Drew and Manyam, Satyanarayana G and Casbeer, David W and Kumar, Manish},
  booktitle={2020 American Control Conference (ACC)},
  pages={138--143},
  year={2020},
  organization={IEEE}
}

@article{hari2020optimal,
  title={Optimal {UAV} route planning for persistent monitoring missions},
  author={Hari, Sai Krishna Kanth and Rathinam, Sivakumar and Darbha, Swaroop and Kalyanam, Krishna and Manyam, Satyanarayana Gupta and Casbeer, David},
  journal={IEEE Transactions on Robotics},
  volume={37},
  number={2},
  pages={550--566},
  year={2020},
  publisher={IEEE}
}

@article{sundar2023fairly,
  title={Fairly extreme: Minimizing outages equitably},
  author={Sundar, Kaarthik and Deka, Deepjyoti and Bent, Russell},
  journal={arXiv preprint arXiv:2310.01348},
  year={2023}
}

@article{bertsimas2011price,
  title={The price of fairness},
  author={Bertsimas, Dimitris and Farias, Vivek F and Trichakis, Nikolaos},
  journal={Operations Research},
  volume={59},
  number={1},
  pages={17--31},
  year={2011},
  publisher={INFORMS}
}

@article{bektacs2020using,
  title={Using $\ell$p-norms for fairness in combinatorial optimisation},
  author={Bekta{\c{s}}, Tolga and Letchford, Adam N},
  journal={Computers \& Operations Research},
  volume={120},
  pages={104975},
  year={2020},
  publisher={Elsevier}
}

@article{Jain1984,
	title        = {A quantitative measure of fairness and discrimination},
	author       = {Jain, Rajendra K and Chiu, Dah-Ming W and Hawe, William R and others},
	year         = 1984,
	journal      = {Eastern Research Laboratory, Digital Equipment Corporation, Hudson, MA},
	volume       = 21
}

@book{toth2002vehicle,
  title={The vehicle routing problem},
  author={Toth, Paolo and Vigo, Daniele},
  year={2002},
  publisher={SIAM}
}

@article{Gini1936,
  title={On the measure of concentration with special reference to income and statistics},
  author={Gini, Corrado},
  journal={Colorado College Publication, General Series},
  volume={208},
  number={1},
  pages={73--79},
  year={1936},
  publisher={Colorado College, Colorado Springs}
}

@book{horn2012matrix,
  title={Matrix analysis},
  author={Horn, Roger A and Johnson, Charles R},
  year={2012},
  publisher={Cambridge university press}
}

@article{sundar2017algorithms,
  title={Algorithms for heterogeneous, multiple depot, multiple unmanned vehicle path planning problems},
  author={Sundar, Kaarthik and Rathinam, Sivakumar},
  journal={Journal of Intelligent \& Robotic Systems},
  volume={88},
  pages={513--526},
  year={2017},
  publisher={Springer}
}

@article{de2007income,
  title={Income inequality measures},
  author={De Maio, Fernando G},
  journal={Journal of Epidemiology \& Community Health},
  volume={61},
  number={10},
  pages={849--852},
  year={2007},
  publisher={BMJ Publishing Group Ltd}
}

@article{gomory1961multi,
  title={Multi-terminal network flows},
  author={Gomory, Ralph E and Hu, Tien Chung},
  journal={Journal of the Society for Industrial and Applied Mathematics},
  volume={9},
  number={4},
  pages={551--570},
  year={1961},
  publisher={SIAM}
}

@book{boyd2004convex,
  title={Convex optimization},
  author={Boyd, Stephen P and Vandenberghe, Lieven},
  year={2004},
  publisher={Cambridge university press}
}

@article{lubin2018polyhedral,
  title={Polyhedral approximation in mixed-integer convex optimization},
  author={Lubin, Miles and Yamangil, Emre and Bent, Russell and Vielma, Juan Pablo},
  journal={Mathematical Programming},
  volume={172},
  pages={139--168},
  year={2018},
  publisher={Springer}
}

@article{reinelt1991tsplib,
  title={TSPLIB—A traveling salesman problem library},
  author={Reinelt, Gerhard},
  journal={ORSA journal on computing},
  volume={3},
  number={4},
  pages={376--384},
  year={1991},
  publisher={INFORMS}
}

@article{kinable2017exact,
  title={Exact algorithms for the equitable traveling salesman problem},
  author={Kinable, Joris and Smeulders, Bart and Delcour, Eline and Spieksma, Frits CR},
  journal={European Journal of Operational Research},
  volume={261},
  number={2},
  pages={475--485},
  year={2017},
  publisher={Elsevier}
}

@Article{sundar2024parametric,
 author={Kaarthik Sundar and Deepjyoti Deka and Russell Bent},
 date={2025},
 journaltitle={Optimization and Engineering},
 title={A parametric, second-order cone representable model of fairness for decision-making problems},
 pages={1--16},
 doi={10.1007/s11081-024-09946-9},
 publisher={Springer},
 eprint={2412.05143},
 eprintclass={math.OC},
 eprinttype={arXiv},
 }

@article{matl2018workload,
  title={Workload equity in vehicle routing problems: A survey and analysis},
  author={Matl, Piotr and Hartl, Richard F and Vidal, Thibaut},
  journal={Transportation Science},
  volume={52},
  number={2},
  pages={239--260},
  year={2018},
  publisher={INFORMS}
}

@article{applegate2002solution,
  title={Solution of a min-max vehicle routing problem},
  author={Applegate, David and Cook, William and Dash, Sanjeeb and Rohe, Andr{\'e}},
  journal={INFORMS Journal on computing},
  volume={14},
  number={2},
  pages={132--143},
  year={2002},
  publisher={INFORMS}
}

@inproceedings{sarpong2013column,
  title={Column generation for bi-objective vehicle routing problems with a min-max objective},
  author={Sarpong, Boadu Mensah and Artigues, Christian and Jozefowiez, Nicolas},
  booktitle={ATMOS-13th Workshop on Algorithmic Approaches for Transportation Modelling, Optimization, and Systems-2013},
  volume={33},
  pages={137--149},
  year={2013},
  organization={Schloss Dagstuhl―Leibniz-Zentrum fuer Informatik}
}

@article{halvorsen2016bi,
  title={The bi-objective mixed capacitated general routing problem with different route balance criteria},
  author={Halvorsen-Weare, Elin E and Savelsbergh, Martin WP},
  journal={European Journal of Operational Research},
  volume={251},
  number={2},
  pages={451--465},
  year={2016},
  publisher={Elsevier}
}

@inproceedings{bektacs2013balancing,
  title={Balancing tour durations in routing a vehicle fleet},
  author={Bekta{\c{s}}, Tolga},
  booktitle={2013 IEEE Symposium on Computational Intelligence in Production and Logistics Systems (CIPLS)},
  pages={9--16},
  year={2013},
  organization={IEEE}
}

@article{zukerman2008fair,
  title={To be fair or efficient or a bit of both},
  author={Zukerman, Moshe and Mammadov, Musa and Tan, Liansheng and Ouveysi, Iradj and Andrew, Lachlan LH},
  journal={Computers \& operations research},
  volume={35},
  number={12},
  pages={3787--3806},
  year={2008},
  publisher={Elsevier}
}

@book{hwang2012multiple,
  title={Multiple objective decision making—methods and applications: a state-of-the-art survey},
  author={Hwang, C-L and Masud, Abu Syed Md},
  volume={164},
  year={2012},
  publisher={Springer Science \& Business Media}
}

@book{miettinen1999nonlinear,
  title={Nonlinear multiobjective optimization},
  author={Miettinen, Kaisa},
  volume={12},
  year={1999},
  publisher={Springer Science \& Business Media}
}

\appendix 
\section{Comprehensive runtime results} \label{app:runtime} This section presents the comprehensive instance-by-instance run times for all the formulations proposed in this work.
\begin{table}[htbp]
    \centering
    \caption{TSPLIB instances -- computation time in seconds for min-sum MTSP, min-max MTSP, and $p$-norm MTSP. For all the instances that timed out, the numbers in parentheses show the optimality gap at the computation time limit of one hour. }
    \label{tab:pNorm}
    \begin{tabular}{lrrrrrr}
    \toprule
    \multirow{2}{*}{name} & \multirow{2}{*}{$\mathcal F_1$} & \multirow{2}{*}{$\mathcal F_{\infty}$} & \multicolumn{4}{c}{$\mathcal F_p$}\\
    \cmidrule{4-7}
    & & & $p=2$ & $p=3$ & $p=5$ & $p=10$ \\
    \cmidrule{1-7}
    \csvreader[head to column names, late after line=\\]{tables/p-norm-tsp.csv}{}
    {\csvcoli-\csvcolii & \dashToTO{\csvcoliii} & \dashToTO{\csvcoliv} & \dashToTO{\csvcolv} & \dashToTO{\csvcolvi} & \dashToTO{\csvcolvii} & \dashToTO{\csvcolviii}}
    \bottomrule
\end{tabular}
\end{table}
\begin{table}[htbp]
    \centering
    \caption{TSPLIB instances -- computation time in seconds for min-sum MTSP and $\varepsilon$-F-MTSP. For all the instances that timed out, the numbers in parentheses show the optimality gap at the computation time limit of one hour. }
    \label{tab:epsFair}
    \begin{tabular}{lrrrrrr}
    \toprule
    \multirow{2}{*}{name} & \multirow{2}{*}{$\mathcal F_1$} & \multicolumn{4}{c}{$\mathcal F^{\varepsilon}$}\\
    \cmidrule{3-7}
    & & $\varepsilon = 0.1$ & $\varepsilon = 0.3$ & $\varepsilon = 0.5$ & $\varepsilon = 0.7$ & $\varepsilon = 0.9$\\
    \cmidrule{1-7}
    \csvreader[head to column names, late after line=\\]{tables/eps-fair-tsp.csv}{}
    {\csvcoli-\csvcolii & \dashToTO{\csvcoliii} & \dashToTO{\csvcolv} & \dashToTO{\csvcolvi} & \dashToTO{\csvcolvii} & \dashToTO{\csvcolviii} & \dashToTO{\csvcolix}}
    \bottomrule
\end{tabular}
\end{table}
\begin{table}[htbp]
    \centering
    \caption{TSPLIB instances -- computation time in seconds for min-sum MTSP and $\Delta$-F-MTSP. For all the instances that timed out, the numbers in parentheses show the optimality gap at the computation time limit of one hour. }
    \label{tab:deltaFair}
    \begin{tabular}{lrrrrrr}
    \toprule
    \multirow{2}{*}{name} & \multirow{2}{*}{$\mathcal F_1$} & \multicolumn{4}{c}{$\mathcal F^{\Delta}$}\\
    \cmidrule{3-7}
    & & $\Delta = 0.1$ & $\Delta = 0.3$ & $\Delta = 0.5$ & $\Delta = 0.7$ & $\Delta = 0.9$\\
    \cmidrule{1-7}
    \csvreader[head to column names, late after line=\\]{tables/delta-fair-tsp.csv}{}
    {\csvcoli-\csvcolii & \dashToTO{\csvcoliii} & \dashToTO{\csvcolv} & \dashToTO{\csvcolvi} & \dashToTO{\csvcolvii} & \dashToTO{\csvcolviii} & \dashToTO{\csvcolix}}
    \bottomrule
\end{tabular}
\end{table}
\begin{table}[htbp]
    \centering
    \caption{CVRPLIB instances -- computation time in seconds for min-sum MTSP, min-max MTSP, and $p$-norm MTSP. For all the instances that timed out, the numbers in parentheses show the optimality gap at the computation time limit of one hour. }
    \label{tab:pNorm_vrp}
    \begin{tabular}{lrrrrrr}
    \toprule
    \multirow{2}{*}{name} & \multirow{2}{*}{$\mathcal F_1$} & \multirow{2}{*}{$\mathcal F_{\infty}$} & \multicolumn{4}{c}{$\mathcal F_p$}\\
    \cmidrule{4-7}
    & & & $p=2$ & $p=3$ & $p=5$ & $p=10$ \\
    \cmidrule{1-7}
    \csvreader[head to column names, late after line=\\]{tables/p-norm-vrp.csv}{}
    {\csvcoli-\csvcolii & \dashToTO{\csvcoliii} & \dashToTO{\csvcoliv} & \dashToTO{\csvcolv} & \dashToTO{\csvcolvi} & \dashToTO{\csvcolvii} & \dashToTO{\csvcolviii}}
    \bottomrule
\end{tabular}
\end{table}
\begin{table}[htbp]
    \centering
    \caption{CVRPLIB instances -- computation time in seconds for min-sum MTSP and $\varepsilon$-F-MTSP. For all the instances that timed out, the numbers in parentheses show the optimality gap at the computation time limit of one hour. }
    \label{tab:epsFair_vrp}
    \begin{tabular}{lrrrrrr}
    \toprule
    \multirow{2}{*}{name} & \multirow{2}{*}{$\mathcal F_1$} & \multicolumn{4}{c}{$\mathcal F^{\varepsilon}$}\\
    \cmidrule{3-7}
    & & $\varepsilon = 0.1$ & $\varepsilon = 0.3$ & $\varepsilon = 0.5$ & $\varepsilon = 0.7$ & $\varepsilon = 0.9$\\
    \cmidrule{1-7}
    \csvreader[head to column names, late after line=\\]{tables/eps-fair-vrp.csv}{}
    {\csvcoli-\csvcolii & \dashToTO{\csvcoliii} & \dashToTO{\csvcolv} & \dashToTO{\csvcolvi} & \dashToTO{\csvcolvii} & \dashToTO{\csvcolviii} & \dashToTO{\csvcolix}}
    \bottomrule
\end{tabular}
\end{table}
\begin{table}[htbp]
    \centering
    \caption{CVRPLIB instances -- computation time in seconds for min-sum MTSP and $\Delta$-F-MTSP. For all the instances that timed out, the numbers in parentheses show the optimality gap at the computation time limit of one hour. }
    \label{tab:deltaFair_vrp}
    \begin{tabular}{lrrrrrr}
    \toprule
    \multirow{2}{*}{name} & \multirow{2}{*}{$\mathcal F_1$} & \multicolumn{4}{c}{$\mathcal F^{\Delta}$}\\
    \cmidrule{3-7}
    & & $\Delta = 0.1$ & $\Delta = 0.3$ & $\Delta = 0.5$ & $\Delta = 0.7$ & $\Delta = 0.9$\\
    \cmidrule{1-7}
    \csvreader[head to column names, late after line=\\]{tables/delta-fair-vrp.csv}{}
    {\csvcoli-\csvcolii & \dashToTO{\csvcoliii} & \dashToTO{\csvcolv} & \dashToTO{\csvcolvi} & \dashToTO{\csvcolvii} & \dashToTO{\csvcolviii} & \dashToTO{\csvcolix}}
    \bottomrule
\end{tabular}
\end{table}
\end{document}